\documentclass{article}
\usepackage[utf8]{inputenc}
\usepackage{indentfirst}
\usepackage[english]{babel}
\usepackage[doi=false,isbn=false,eprint=false,url=false]{biblatex}
\usepackage[margin=1in]{geometry}
\usepackage[colorinlistoftodos]{todonotes}
\usepackage{amsfonts}
\usepackage{circuitikz}
\usepackage[dvipsnames]{xcolor}
\usepackage{graphicx} 
\usepackage{mathtools, stmaryrd}
\usepackage{amsfonts, amsmath, amssymb, amsthm}

\newtheorem{theorem}{Theorem}
\newtheorem{lemma}{Lemma}
\newtheorem{definition}{Definition}
\newtheorem{corollary}{Corollary}
\newtheorem{remark}{Remark}
\newtheorem{proposition}{Proposition}

\title{Some results on split sum of impartial games}
\author{Carret François}
\date{\today}

\begin{document}

\maketitle

\begin{abstract}
 As it is stated in ``Unsolved problems in combinatorial games" \cite{unsolvedpb2015}  as problem A13, Nim with pass is a difficult problem. Nim was one of the first combinatorial games to be solved but when we introduce a pass the game becomes far more complex. 
 
 In this article we extend the use of the split sum introduced in “Investigations of Impartial Games With a Pass” \cite{hirsch2020} and we find cases where the split sum act the same as an classical disjunctive sum. We also introduce generalization of Grundy value for the split sum and find that games of same Grundy values can give very different properties with a split sum.

 
\end{abstract}

\section{Introduction}

In this paper, we consider two-player \emph{impartial games}, that is, two-player games in which, in every position, the sets of options for both players are the same. We assume that the games are under \emph{normal play convention}, that is, the player who has no legal move \emph{loses}. In a \emph{short game}, a position is
never repeated and there are only a finite number of other positions which can be
reached.

One of the most famous short impartial games is Nim. 
In a game of Nim, there are $n$ piles, each with a finite number of tokens. At their turn a player can take any strictly positive number of tokens from one pile.

In 1901, Bouton in \cite{Bouton1901} solved the game of Nim. Sprague and Grundy proved that any impartial games can be reduced to a Nim position according to disjunctive sum \cite{Sprague1935} \cite{Grundy1939}. The Sprague and Grundy's theorem is useful to compute winning strategy in many impartial games. 

However, for games with a pass, this method seems not to work, and these games seem difficult to analyze. A game with a pass is a game where it is possible to pass a turn but when a player has passed their turn, none of the player can pass anymore. 
Here are some results considering this convention \cite{manabe2025spraguegrundyvaluesgamespass}, \cite{miyadera2022multidimensionalchocolatenimpass}, \cite{miyadera2022restrictednimpass},  \cite{miyadera2017ryuonimvariantclassical}, \cite{Morrison_2011}, \cite{LC15}, \cite{integers1}, \cite{CLLW18}.

In \cite{hirsch2020}, a generalization of game with pass is formalized with the split sum. This generalization can help us find better result on games with pass.

Generally we want to find way to calculate which player has a winning strategy in a game. If we know for each position which player has a winning strategy then we can easily compute winning moves as winning moves are moves to a position where the opponent does not have a winning strategy.


The outline of this paper is as follows. In the latter part of this section, we
introduce necessary previous studies for completeness and introduce split sum. In Section~\ref{result_on_split_sum}, we establish the split value of a game as a counterpart of the Grundy value for split sum of games. We also determine in this section a way to calculate the split value of a game with the split value of its suppositions. In Section~\ref{sec_split_and_nim}, we solve split sum of Nim when we have an important number of times $*$. In Section~\ref{sec_modulo_strong_equality}, we introduce a generalization of Grundy and split values in the objective of characterizing the comportment of a game with split sum. This generalization of Grundy and split values gives us notably that there is a large possibility of behavior for games with a split sum.

\subsection{Impartial Combinatorial Game Theory Definitions and Theorem}

In this section we remind some classic results and definitions from combinatorial game theory and some results that we will use in the other sections. 

For a game $G$ we will note $G=\{G'_1,G'_2,...,G'_n\}$ with $G'_1,G'_2,...,G'_n$ the options of $G$.

One of the main subjects of combinatorial game theory is the sum of different games.

\begin{definition}
The sum of two games G and H is the game where either player on their turn can play on either G or H but not both. We denote by “$+$” the sum and by “$\sum$” the sum of multiple terms.
\end{definition}

\begin{definition}
    We call a game where the previous player has a winning strategy, a $\mathcal{P}$-position or a zero position. We call a game where the next player has a winning strategy, an $\mathcal{N}$-position.
\end{definition}

\begin{definition}
    The birthday of a game is the maximal number of moves that can be played on this game. The birthday also corresponds to the steps for creating the game when we construct games by induction. For a game $G$, we write $b(G)$ the birthday of $G$.
\end{definition}

The Nim game is central in the study of impartial games with the Theorem~\ref{theorem:sprague_grundy}.

\begin{definition}
A Nimber $*n$ is the game where the options are $*k$ for $0 \le k<n$. 

A Nim game is in this article the sum of a certain number of nimbers.

We name $*1$ as $*$ or ``star'' and $*0$ is the empty game also known as $0$.
\end{definition}

It gives us that a Nimber is a game of Nim with a single pile and a Nim game is a game of Nim with any number of piles.

\begin{definition}
Two games $H$ and $G$ are equal if for all $X$, the outcomes of $G+X$ and $H+X$ are the same. We denote equality by $=$. If two games $G$ and $H$ have the same option sets, we denote $G \equiv H$.
\end{definition}

\begin{theorem}
\label{theorem:sprague_grundy}
For each impartial short game $G$ there exists an integer n such that $G=*n$. 
\end{theorem}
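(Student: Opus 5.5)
The plan is the standard induction on the birthday $b(G)$, with the mex rule as the engine. I would first set up two preliminary facts about the equality defined above.

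\textbf{Zero games.} A game $Z$ that is a $\mathcal{P}$-position satisfies $Z=0$. That is, for every $X$, $Z+X$ and $X$ have the same outcome. I prove this by a strategy-copying argument.
\begin{itemize}
\item If the next player wins $X$, they play that winning strategy in the $X$ component. Whenever the opponent moves in $Z$, they answer with the $\mathcal{P}$-strategy of $Z$.
\item If the previous player wins $X$, the same argument works with the roles exchanged.
\end{itemize}
Finiteness of short games guarantees that the play terminates, so the responding player makes the last move.

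\textbf{Cancellation.} $G=H$ if and only if $G+H$ is a $\mathcal{P}$-position. For the direct implication, take $X=H$: then $H+H$ is $\mathcal{P}$ by the mirror strategy, and equality forces $G+H$ to be $\mathcal{P}$ as well. For the converse, $G+X = G+X+(H+H)$ by the first fact, which equals $(G+H)+(H+X)$ by commutativity and associativity of $+$, which is clear from the definition. Applying the first fact again gives $H+X$.

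\textbf{Induction.} Let $G=\{G'_1,\dots,G'_n\}$. Each option has strictly smaller birthday, so by induction $G'_i=*a_i$ for some integers $a_i$. Set $n_0=\operatorname{mex}\{a_1,\dots,a_n\}$. Replacing options by equal games preserves equality, because if $G'_i=H'_i$ for all $i$ then a strategy argument using cancellation shows $\{G'_i\}=\{H'_i\}$. So it suffices to show that $G^*=\{*a_1,\dots,*a_n\}$ equals $*n_0$. By cancellation, this means showing that $G^*+*n_0$ is a $\mathcal{P}$-position. I check that every move from it leads to an $\mathcal{N}$-position.
\begin{itemize}
\item A move to $*a_i+*n_0$ with $a_i\neq n_0$: the opponent equalizes the two nimbers, leaving $*m+*m$, which is $\mathcal{P}$ by mirroring. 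When $a_i>n_0$, the opponent moves inside $*a_i$ down to $*n_0$; when $a_i<n_0$, the opponent moves inside $*n_0$ down to $*a_i$.
\item A move to $G^*+*k$ with $k<n_0$: by definition of mex, some $a_i=k$, so the opponent moves to $*k+*k$, which is $\mathcal{P}$.
\end{itemize}
Note that $a_i=n_0$ never occurs, by the definition of mex. The base case $G=\{\}=*0$ is trivial.

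\textbf{Main obstacle.} The delicate step is the substitution principle: replacing options of $G$ by equal games yields an equal game. I would handle it through cancellation. I show $G+G^*$ is $\mathcal{P}$ by answering any move $G'_i$ with $*a_i$ in the other component, or vice versa. The resulting position $G'_i+*a_i$ is $\mathcal{P}$ by the induction hypothesis combined with cancellation.
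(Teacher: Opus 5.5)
Your proof is correct. The paper gives no proof of this statement: it is quoted as classical background with a citation to Sprague and Grundy. So there is no in-paper route to compare against. What you have written is the standard self-contained proof, in exactly the notion of equality the paper uses: a $\mathcal{P}$-position equals $0$; $G=H$ iff $G+H$ is a $\mathcal{P}$-position; then a mex computation by induction on birthday.

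Two small remarks. First, some tacit assumptions are worth stating:
\begin{itemize}
\item Your zero-game step uses that every short impartial game is either $\mathcal{P}$ or $\mathcal{N}$. This follows by an easy induction: a position is $\mathcal{N}$ iff some option is $\mathcal{P}$.
\item Your final step uses transitivity of $=$, which is immediate from the definition.
\end{itemize}

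Second, and more usefully, the step you flag as the main obstacle can be skipped entirely. Show directly that $G+*n_0$ is a $\mathcal{P}$-position:
\begin{itemize}
\item A move to $G'_i+*n_0$ has the same outcome as $*a_i+*n_0$ (apply $G'_i=*a_i$ with $X=*n_0$). That position is $\mathcal{N}$ because $a_i\neq n_0$.
\item A move to $G+*k$ with $k<n_0$ is answered by moving to $G'_i+*k$ for some $i$ with $a_i=k$. That position has the outcome of $*k+*k$, namely $\mathcal{P}$.
\end{itemize}
Cancellation then gives $G=*n_0$. You never need to construct $G^*$ or prove the substitution principle; your substitution argument is really this same trick applied with $X=*a_i$.
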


This theorem was proved by Sprague and Grundy in \cite{Sprague1935} and \cite{Grundy1939}.

This theorem is central in the study of impartial game as it gives a way to compute the sum of any impartial game starting from a Grundy value that can be computed for every short impartial game.

\begin{definition}
The Grundy value of a game $G$ is the number $n$ such that $G=*n$.
We denote the Grundy value of $G$ by $g(G)$. 
\end{definition}

\begin{definition}
The Nim sum of two non negatives integers $a$ and $b$ is the sum in binary writing without carrying. We denote the Nim sum of $a$ and $b$ by $a \oplus b$.
\end{definition}

\begin{definition}
    The mex function of a set $A$ is the least non negative integer which is not in $A$.
\end{definition}

The Nim sum is named like that because we have that $*a+*b=*(a \oplus b)$. To calculate the Grundy value of a game, we can make the Nim sum of the Grundy values of the different components of the game or take the mex of the set of the Grundy values of its options.

If $G'$ is an option from $G$, we denote $G'\in G$.

\subsection{Split Sum}

For studying impartial games with a pass we introduce a generalization of pass, the split sum. It help us find results on game with pass as well as more complex games that would imply such split sum.

We use the definitions of split sum from \cite{hirsch2020}.
We denote the split sum of $G$ and $H$ by $G \circ H$. In the following definition, $g$ ranges over the options of $G$ and $h$ ranges over the options of $H$.

\begin{definition} 
\label{splitdefinition}
\[G \circ H \equiv
 \begin{cases} 
      0 & \text{if } G \equiv 0, \\
      G & \text{if } H \equiv 0, \\
      
      \{G \circ h, g \circ H\}  & \text{otherwise.}
   \end{cases}
\]
\end{definition}

For practical reason, to represent Nim position we use an isomorphic game on a sequence of cases at each turn one player can move one token to the right as long as it has not reached the right most case represented with a black case. We represent the number of tokens at the left of the split sum in a case with a number in black and the number of tokens in a case at the right of the split sum with a number in green.

\begin{figure}[!ht]
\centering
\resizebox{1\textwidth}{!}{%
\begin{circuitikz}
\tikzstyle{every node}=[font=\fontsize{18.2pt}{23.7pt}\selectfont]
\draw  (0,0) rectangle (1,1);
\draw  (1,0) rectangle (2,1);
\draw  (2,0) rectangle (3,1);
\draw  (3,0) rectangle (4,1);
\draw  (4,0) rectangle (5,1);
\draw  (5,0) rectangle (6,1);
\draw  (6,0) rectangle (7,1);
\draw [ fill={rgb,255:red,0; green,0; blue,0}, fill opacity=1] (7,0) rectangle (8,1);
\node [font=\fontsize{18.2pt}{23.7pt}\selectfont, inner xsep=0.080cm, inner ysep=0.085cm, rounded corners=0.020cm] at (1.25,0.5) {3};
\node [font=\fontsize{18.2pt}{23.7pt}\selectfont, inner xsep=0.080cm, inner ysep=0.085cm, rounded corners=0.020cm] at (2.25,0.5) {5};
\node [font=\fontsize{18.2pt}{23.7pt}\selectfont, color=OliveGreen, text opacity=1, , inner xsep=0.080cm, inner ysep=0.085cm, rounded corners=0.020cm] at (4.75,0.5) {4};
\node [font=\fontsize{18.2pt}{23.7pt}\selectfont, inner xsep=0.080cm, inner ysep=0.085cm, rounded corners=0.020cm] at (4.25,0.5) {3};
\node [font=\fontsize{18.2pt}{23.7pt}\selectfont, color=OliveGreen, text opacity=1, , inner xsep=0.080cm, inner ysep=0.085cm, rounded corners=0.020cm] at (6.75,0.5) {2};
\end{circuitikz}
}%
\caption{Example of the representation of a Nim game $G$}
\label{fig:1}
\end{figure}

For example, Figure~\ref{fig:1} represents the game $(*6+*6+*6+*5+*5+*5+*5+*5+*3+*3+*3)\circ(*3+*3+*3+*3+*+*)$.

The two following theorems come from \cite{hirsch2020} and are useful to understand some properties of the split sum.

\begin{theorem}
Let $G$ be any impartial game and $*n$ be a Nimber.
Let $H$ be any impartial game. Then $G \circ H + *n = 0 \Leftrightarrow (G + *n) \circ  H = 0$.
\end{theorem}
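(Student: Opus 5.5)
We will prove the stronger claim that, for all impartial games $G,H$ and all $n\ge 0$, the games $G\circ H+*n$ and $(G+*n)\circ H$ have the \emph{same outcome}. The stated equivalence for $\mathcal{P}$-positions then follows immediately. The proof is by induction on $b(G)+b(H)+n$. The idea is that, as long as the split sum has not collapsed, the two games have option sets that correspond move by move. The only genuine difference between them is what happens once $G$ is exhausted, so the base cases must be checked by hand.

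First the degenerate cases. If $H\equiv 0$, then by Definition~\ref{splitdefinition} both games are literally $G+*n$, so there is nothing to prove. Now suppose $G\equiv 0$. On the left, $G\circ H\equiv 0$, so the left game is $*n$, which is $\mathcal{P}$ if and only if $n=0$. On the right we have $*n\circ H$.
\begin{itemize}
\item If $n=0$, then $*n\circ H \equiv 0$, which is a $\mathcal{P}$-position.
\item If $n\ge 1$, the next player can move in the left component from $*n$ to $*0$. This reaches $0\circ H\equiv 0$, a $\mathcal{P}$-position, so $*n\circ H$ is an $\mathcal{N}$-position.
\end{itemize}
Hence the outcomes agree whenever $G\equiv 0$ (including the case $n=0$, $H$ arbitrary). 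Note also that $G+*n\equiv 0$ forces $G\equiv 0$ and $n=0$, so this case analysis covers every situation in which the right-hand split sum collapses.

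For the inductive step, assume $G\not\equiv 0$ and $H\not\equiv 0$. The options of $G\circ H+*n$ are of three kinds:
\[
G'\circ H+*n,\qquad G\circ H'+*n,\qquad G\circ H+*m \ (m<n).
\]
The options of $(G+*n)\circ H$ are of the matching three kinds:
\[
(G'+*n)\circ H,\qquad (G+*n)\circ H',\qquad (G+*m)\circ H.
\]
This gives a bijection between the two option sets. In each corresponding pair, $b(G)+b(H)+n$ strictly decreases, so by the induction hypothesis the two options have the same outcome. The induction hypothesis is applied to the full claim, so it also covers options in which $G'\equiv 0$ or $H'\equiv 0$, where the base cases above are used.

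Since a position is $\mathcal{N}$ exactly when it has a $\mathcal{P}$ option, the two games have the same outcome. In particular $G\circ H+*n=0$ if and only if $(G+*n)\circ H=0$.

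The main obstacle is the case $G\equiv 0$, where the two sides are not isomorphic as game trees: on the right, $H$ is still playable while the $*n$ component survives. The observation that settles it is that the move $*n\to 0$ immediately kills the split sum. This forces $*n\circ H$ to have the same outcome as $*n$, even though the two games are in general not equal.
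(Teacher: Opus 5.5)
Your proof is correct and complete. The paper gives no proof of this statement: it imports it from \cite{hirsch2020} and uses it as a black box. So there is no in-paper argument to compare against, and your strong induction on $b(G)+b(H)+n$ supplies a self-contained justification.

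The induction isolates exactly where the content lies. Whenever neither $G$ nor $H$ is $\equiv 0$, the moves of $G\circ H+*n$ and $(G+*n)\circ H$ correspond one-to-one (a move in $G$, a move in $H$, or a move in $*n$). The game trees genuinely differ only when $G\equiv 0$. There, the collapse rule $0\circ H\equiv 0$ of Definition~\ref{splitdefinition} makes $*n\to 0$ an immediately terminating move, and you handle this case correctly.

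This also explains why the theorem is stated only for the outcome ``$=0$'' and not as an identity of games. With $G\equiv 0$, $H=*$, $n=1$, the left side is $0\circ *+*\equiv *$. The right side is $*\circ *$, whose options are $*\circ 0\equiv *$ and $0\circ *\equiv 0$, so it equals $*2$.

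One small remark: your ``stronger claim'' of equal outcomes is in fact equivalent to the stated equivalence, since every impartial position is either $\mathcal{N}$ or $\mathcal{P}$. The reformulation is simply the convenient induction hypothesis, not a strengthening.
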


\begin{theorem}
\label{theorem_spit_sum}
Let $G$, $H$, and $K$ be impartial games. Then
$(G \circ H) \circ K = G \circ (H + K).$
\end{theorem}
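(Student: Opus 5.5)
We prove $(G \circ H) \circ K \equiv G \circ (H + K)$, i.e. identity of game trees, by induction on $b(G)+b(H)+b(K)$, unfolding Definition~\ref{splitdefinition} on both sides and matching the cases. Since the two sides are then literally the same game, they are in particular equal.

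\textbf{Degenerate cases.}
\begin{itemize}
\item If $G \equiv 0$, then $G\circ H \equiv 0$, so the left side is $0\circ K \equiv 0$. The right side is $0 \circ (H+K) \equiv 0$.
\item If $G\not\equiv 0$ and $H \equiv 0$, then $G\circ H\equiv G$, so the left side is $G\circ K$. Since $H+K\equiv K$, the right side is also $G\circ K$.
\item If $G,H\not\equiv 0$ and $K\equiv 0$, the left side is $G\circ H$ and the right side is $G\circ(H+0)\equiv G\circ H$.
\end{itemize}
The first case must be checked before using the second clause of the definition, since $0\circ 0\equiv 0$ is governed by the first clause.

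\textbf{Generic case.} Now assume $G,H,K\not\equiv 0$. Then $G\circ H\not\equiv 0$, since it has the option $g\circ H$ for any option $g$ of $G$. Also $H+K\not\equiv 0$. Hence both sides fall into the third clause.

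The left side has options $(G\circ H)\circ k$ for options $k$ of $K$, and $x\circ K$ for options $x$ of $G\circ H$. The latter are $(G\circ h)\circ K$ and $(g\circ H)\circ K$.

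The right side has options $G\circ y$ for options $y$ of $H+K$, namely $G\circ(h+K)$ and $G\circ(H+k)$, together with $g\circ(H+K)$.

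By the induction hypothesis, which applies because one birthday has dropped in each triple:
\begin{itemize}
\item $(G\circ H)\circ k\equiv G\circ(H+k)$,
\item $(G\circ h)\circ K\equiv G\circ(h+K)$,
\item $(g\circ H)\circ K\equiv g\circ(H+K)$.
\end{itemize}
The three families of options correspond bijectively, so the option sets coincide.

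\textbf{Main obstacle.} The only delicate point is the bookkeeping of the degenerate cases inside the induction: some options, such as $G\circ h$ with $h\equiv 0$, become degenerate. This is handled automatically, because the induction hypothesis is the full identity for all triples of smaller total birthday, including the degenerate ones treated first.
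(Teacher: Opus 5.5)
Your proof is correct. The case split follows the three clauses of Definition~\ref{splitdefinition}. The remark that $G\circ H\not\equiv 0$ whenever $G,H\not\equiv 0$ is exactly what puts the left side into the third clause. Each of the three option families lowers one birthday, so the induction hypothesis applies, and it already covers degenerate triples such as $(G,h,K)$ with $h\equiv 0$.

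There is no proof in the paper to compare against. The statement is quoted from \cite{hirsch2020} without proof, so your structural induction supplies a self-contained justification.

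Note that you prove more than is stated: the identity $(G\circ H)\circ K\equiv G\circ(H+K)$, not just equality. The stronger form matters here because the left argument of $\circ$ does not respect $=$. The paper's own example is $*\circ *$ versus $(*+*+*)\circ *$. With mere equality you could not substitute one side for the other inside a further split sum. The identity form gives, for instance, $((G\circ H)\circ K)\circ L\equiv G\circ(H+K+L)$ at once.
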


\begin{remark}
    For a game $G$, $G\circ *$ is the same game as $G$ with a pass as you can consider the move on $*$ as a pass that can be taken only if $G$ is not a terminal position. Therefore split sum is a generalization of games with a pass.
\end{remark}

\section{General Results on Split Sums}
\label{result_on_split_sum}

In this section, we give some interesting results on split sums and introduce the split value as a counterpart of Grundy value for split sums.

\begin{proposition}
\label{righequality}
For all G, H, K, and X short impartial games: 
\[ G \circ (H+X) = G\circ (K+X) \Longleftrightarrow H=K .\]
\end{proposition}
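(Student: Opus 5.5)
The plan is to handle the two directions separately, with one caveat first. When $G\equiv 0$, Definition~\ref{splitdefinition} gives $G\circ(H+X)\equiv 0\equiv G\circ(K+X)$ for all $H,K$, so the direction ``$\Rightarrow$'' fails. I therefore read the statement as assuming $G\not\equiv 0$, and I would point this out explicitly. Throughout, I use that two impartial games $A,B$ are equal iff $A+*n$ and $B+*n$ have the same outcome for every $n$ (by Theorem~\ref{theorem:sprague_grundy}). Combined with the first theorem quoted from \cite{hirsch2020}, this gives
\[G\circ A=*n \iff (G+*n)\circ A=0 .\]
So everything reduces to understanding when $G'\circ A$ is a $\mathcal{P}$-position, for varying first argument $G'$.

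\textbf{Direction ``$\Leftarrow$''.} Since $H=K$ implies $H+X=K+X$, it suffices to prove a substitution lemma: if $A=B$, then $G\circ A=G\circ B$ for every $G$. By the reduction above, this amounts to showing that $G'\circ A$ and $G'\circ B$ have the same outcome for every $G'$. I would prove this by induction on $b(G')+b(A)+b(B)$, describing $\mathcal{P}$-positions of $G'\circ A$ recursively.
\begin{itemize}
\item A move in $G'$ keeps the right argument, so it is handled by induction on $G'$.
\item A move $A\to a$ is answered by a Grundy-equivalent move in $B$. Because $A=B$, every option $a$ of $A$ has the Grundy value of some option $b$ of $B$, or else of an option of $B$ exceeding $g(A)$, whose reverse move restores $g(A)$.
\item The delicate case is the base case where one of $A,B$ is $\equiv 0$ and the other is not, since then $G'\circ 0\equiv G'$ is not a genuine sum. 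Here I would use that if $B=0$ with $B\not\equiv 0$, then every move in $B$ can be reversed back to a position of value $0$. This should let a mirror strategy show that $G'\circ B$ has the same outcome as $G'$.
\end{itemize}

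\textbf{Direction ``$\Rightarrow$''.} Suppose $H\neq K$, so $Y:=H+X$ and $Z:=K+X$ satisfy $g(Y)\neq g(Z)$. By the substitution lemma, I may replace $Y$ and $Z$ by the nimbers $*g(Y)$ and $*g(Z)$. It then suffices to show that, for $G\not\equiv 0$, the map $a\mapsto G\circ *a$ is injective on values. By Theorem~\ref{theorem_spit_sum} and induction on $a$, $G\circ *a$ has options $G'\circ *a$ for $G'\in G$, together with $G\circ *b$ for $b<a$. Assume for contradiction that $G\circ *a=G\circ *b$ with $b<a$. Then $G\circ *b$ is itself an option of $G\circ *a$, so adding $G\circ *b$ to both sides gives a zero position $G\circ *a + G\circ *b$ having an option of the form $G\circ *b+G\circ *b=0$. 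That is a contradiction, because an option of a $\mathcal{P}$-position cannot be a $\mathcal{P}$-position.

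\textbf{Main obstacle.} The main obstacle is the substitution lemma in the ``$\Leftarrow$'' direction, especially the degenerate case where $A\equiv 0$ but $B=0$ with $B\not\equiv 0$. There the split sum stops being a sum, so the mirror argument must check that reversible moves in $B$ never let the opponent exploit the terminal rule $G\circ 0\equiv G$.
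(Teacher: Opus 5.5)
Your proposal is correct. Your caveat is also a genuine correction to the statement: for $G\equiv 0$ both sides are $\equiv 0$, so ``$\Rightarrow$'' fails. The paper's proof breaks exactly there, since it has the first player move $H\to H'$ with $H'=K$, which is illegal when $G\equiv 0$.

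Your ``$\Rightarrow$'' is essentially the paper's argument: a game never equals one of its options. The paper exhibits $G\circ(H'+X)=G\circ(K+X)$ as an option of $G\circ(H+X)$; you exhibit $G\circ *z$ as an option of $G\circ *y$ after replacing the right arguments by nimbers. The options of $G\circ *a$ come straight from the definition, so the associativity theorem is not needed there.

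The genuine difference is in ``$\Leftarrow$''. The paper plays a mirror strategy directly on $G\circ(H+X)+G\circ(K+X)$, by induction on $b(G)+b(H)+b(K)+b(X)$. It copies moves in $G$ and $X$, and answers a move $H\to H'$ either by $K\to K'$ with $K'=H'$ or by the reversing move $H'\to H''$ with $H''=K$. You instead use the quoted theorem $G\circ A+*n=0\Leftrightarrow (G+*n)\circ A=0$ to reduce the problem to comparing outcomes of the single games $G'\circ A$ and $G'\circ B$. Your route buys an induction on one game instead of a two-component sum. The cost is dependence on that external theorem, which the paper's self-contained mirror argument avoids.

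Two remarks on your sketch. First, your second bullet should be phrased as outcome transfer, not as ``answering''; the phrase about an option of $B$ exceeding $g(A)$ is not what is needed. Suppose $G'\circ a$ is a $\mathcal{P}$-position.
If $g(a)<g(B)$, pick $b\in B$ with $b=a$ and apply induction to $(G',a,b)$.
If $g(a)>g(B)$, pick $a'\in a$ with $a'=B$. Then $G'\circ a'$ is an option of the $\mathcal{P}$-position $G'\circ a$, hence an $\mathcal{N}$-position, and induction on $(G',a',B)$ shows that $G'\circ B$ is an $\mathcal{N}$-position.

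Second, your ``main obstacle'' is not one. For $G'\not\equiv 0$, the options of $G'\circ A$ are $G'\circ a$ and $G''\circ A$ even when $A\equiv 0$, because $G''\circ 0\equiv G''$. The only special rule is $0\circ A\equiv 0\equiv 0\circ B$, which treats $A$ and $B$ identically. So no separate base case is needed, which is why the paper's mirror argument never mentions one.
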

\begin{proof}
We prove this by induction on $b(G)+b(H)+b(K)+b(X)$.

If $H=K$, for $X$ and $G$, second player can win \[G \circ (H+X) + G \circ(K+X)\] by playing the same move as the other player on the other component if possible. If it is not possible to play the same move it means that the move was on $H$ or on $K$. Without loss of generality we assume that the move was on $H$ and the position becomes $G\circ(H'+X)+G\circ(K+X)$. Then, since $H=K$, $K$ has an option $K'$ that satisfies $H'=K'$ or $H'$ has an option $H''=K$. Then by induction hypothesis, we obtain that \[G \circ (H+X) + G \circ(K+X)\] is a win for the second player.

If $H$ is not equal to $K$ then first player can play to have $H'=K$ or $H=K'$ as both are impartial games then the player can win because \[H'=K \implies G \circ (H'+X) = G\circ (K+X) \text{ (and } H=K' \implies G \circ (H+X) = G\circ (K'+X)\text{)}  \] as proved in the first half of the proof. 
\end{proof}

We can then always replace $H$ by $*g(H)$ in $G \circ H$.

On the other hand, if $G=K$ we do not always have $G \circ H = K \circ H$ as for example $* \circ *$ is a first player win but $(*+*+*) \circ *$ is a second player win and $*+*+*=*$.

We then have that either $G$ is empty, either there exists exactly one $n$ such that $G \circ(*n)=0$ or either for all $n$, $G \circ (*n) \ne 0$.

\begin{definition}
For a nonempty game $G$, the \emph{split value} of $G$, $s(G)$ is
\[s(G) =
 \begin{cases} 
      n & \text{ if } G \circ(*n)=0, \\
      \infty &  \text{ if for all $n$, } G \circ (*n) \ne 0.
   \end{cases}
\]

\end{definition}

\begin{remark}
    $s(G)$ is uniquely determined as if $G \circ(*n)=0$ and $G \circ(*m)=0$ then by the previous theorem it implies that $*n=*m$ which means $n=m$.
\end{remark}

Like for Grundy value, we can use a mex rule allowing a better way to compute the split value of a game.

\begin{theorem}
\label{mexrule}
For all nonempty G,
\[s(G) =
 \begin{cases} 
      \infty & \text{ if } 0 \in G \\
      {\rm mex} (\{s(G')\mid G'\in G\})  &  \text{ otherwise.} 
   \end{cases}
\]

\end{theorem}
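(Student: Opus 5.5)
The plan is to compute the outcome of $G\circ *n$ directly from Definition~\ref{splitdefinition}, using only the outcomes of the games $G'\circ *k$ for the options $G'$ of $G$. No induction beyond the recursive definition of outcomes should be needed. The first step is to write down the options of $G\circ *n$ for nonempty $G$. If $n\ge 1$, neither component is empty, so the options are the $G\circ *k$ for $0\le k<n$ together with the $G'\circ *n$ for $G'\in G$. If $n=0$, then $G\circ 0\equiv G$, whose options are the $G'$, and each of these satisfies $G'\equiv G'\circ 0$. So in every case the options of $G\circ *n$ are exactly $\{G\circ *k : k<n\}\cup\{G'\circ *n : G'\in G\}$, with $G'\circ *n\equiv 0$ whenever $G'\equiv 0$.

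\emph{Case $0\in G$.} Here $0\circ *n\equiv 0$ is an option of $G\circ *n$ for every $n$. Hence every $G\circ *n$ is an $\mathcal{N}$-position, and $s(G)=\infty$ by definition.

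\emph{Case $0\notin G$.} Every option $G'$ is nonempty, so $s(G')$ is defined. Let $m={\rm mex}(\{s(G')\mid G'\in G\})$, where values equal to $\infty$ simply never hit a finite integer. I will show that $G\circ *m$ is a $\mathcal{P}$-position by checking that each of its options is an $\mathcal{N}$-position.
\begin{itemize}
\item An option of the form $G\circ *k$ with $k<m$: by the definition of mex there is some $G'\in G$ with $s(G')=k$. Then $G'\circ *k=0$, and this game is an option of $G\circ *k$, so $G\circ *k$ is an $\mathcal{N}$-position.
\item An option of the form $G'\circ *m$: since $s(G')\neq m$, the uniqueness remark gives $G'\circ *m\neq 0$, so this option is an $\mathcal{N}$-position.
\end{itemize}
Therefore $G\circ *m=0$, and by the uniqueness of the split value, $s(G)=m$.

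The only delicate point, and what I expect to be the main obstacle, is the bookkeeping at $n=0$. There the split sum collapses to $G$ itself rather than following the generic option formula. I would handle it exactly as in the first step, by observing that $G'\equiv G'\circ 0$, so the uniform description of the options still holds and the case $m=0$ needs no separate argument. I would also note explicitly that $s(G')$ may be $\infty$ without affecting the mex.
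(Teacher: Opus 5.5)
Your proof is correct and follows the same route as the paper. In both, every option of $G\circ *m$ (with $m$ the mex) is shown to be an $\mathcal{N}$-position: for $k<m$, the option $G\circ *k$ is won by moving to some $G'\circ *k$ with $s(G')=k$, and the moves $G'\circ *m$ are handled by uniqueness of split values. Your explicit handling of the $n=0$ case via $G'\equiv G'\circ 0$ and of options with $s(G')=\infty$ spells out bookkeeping that the paper leaves implicit.
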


\begin{proof}
If $0 \in G$ then for all $H$, first player can play from $G \circ H$ to 0 then $G \circ H \ne 0$ thus $s(G)=\infty$. \\

Otherwise, for all $n<{\rm mex}(\{s(G')\})$, first player can play from $G \circ *n$ to a $G' \circ *n$ with $s(G')=n$ then $G \circ *n$ is a $\mathcal{N}$-position. 

For $n={\rm mex}(\{s(G')\})$, from $G \circ *n$ first player can either play on $*n$ or on $G$ but by playing on $*n$ they go to an $\mathcal{N}$-position then $G \circ *n$ is a win for second player and if the first player plays on $G$, by the fact that $n={\rm mex}(\{s(G')\})$, $G' \circ *n \ne 0$. 

Thus, $s(G)={\rm mex}(\{s(G')\})$.
 \end{proof}

Even if the split value of the disjunctive sum of two game is difficult to calculate, there is an easy way to compute the split value of a split sum.

\begin{corollary}
Let G and H be any impartial games then $s(G \circ H) = s(G) \oplus g(H)$, with $ \infty \oplus n=\infty$ for all $n$. 
\end{corollary}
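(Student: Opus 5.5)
The plan is to reduce the statement to the definition of the split value, using the associativity of Theorem~\ref{theorem_spit_sum} and the replacement property of Proposition~\ref{righequality}. Throughout I assume $G$ is nonempty, since otherwise $G\circ H\equiv 0$ and $s(G\circ H)$ is not defined. If $H\equiv 0$ then $G\circ H\equiv G$ and $g(H)=0$, so the formula holds trivially. In the general case $G\circ H$ is nonempty whenever $G$ is, so $s(G\circ H)$ makes sense.

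\textbf{Step 1.} Fix $n\ge 0$. By Theorem~\ref{theorem_spit_sum},
\[(G\circ H)\circ *n = G\circ (H+*n).\]
By Sprague--Grundy and the Nim-sum rule, $H+*n = *(g(H)\oplus n)$. Applying Proposition~\ref{righequality} with $X=0$ gives
\[G\circ(H+*n) = G\circ *(g(H)\oplus n).\]
Equality of games preserves outcomes (take the test game to be $0$), so
\[(G\circ H)\circ *n = 0 \iff G\circ *(g(H)\oplus n)=0.\]

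\textbf{Step 2.} By the definition of $s$ and the uniqueness remark, $G\circ *m=0$ holds exactly when $m=s(G)$, with no such $m$ if $s(G)=\infty$. Consider the two cases.

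If $s(G)=\infty$, no $n$ makes $(G\circ H)\circ *n=0$. Hence $s(G\circ H)=\infty=\infty\oplus g(H)$.

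If $s(G)$ is finite, $(G\circ H)\circ*n=0$ holds iff $g(H)\oplus n=s(G)$. Since $\oplus$ is a group operation in which every element is its own inverse, this is equivalent to $n=s(G)\oplus g(H)$. So this $n$ is the unique value with $(G\circ H)\circ *n=0$, and $s(G\circ H)=s(G)\oplus g(H)$.

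\textbf{Main obstacle.} The only delicate point is Step 1's use of Proposition~\ref{righequality}. It must be checked that the proposition applies with $X=0$ and $K=*(g(H)\oplus n)$, and that "$=$" there is full game equality, so that being a $\mathcal{P}$-position transfers between the two sides. Everything else is bookkeeping with the definition of $s$ and the convention $\infty\oplus n=\infty$.
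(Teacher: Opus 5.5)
Your proof is correct and follows the same route as the paper. Theorem~\ref{theorem_spit_sum} turns $(G\circ H)\circ *n$ into $G\circ(H+*n)$, Proposition~\ref{righequality} replaces $H+*n$ by the nimber $*(g(H)\oplus n)$, and the value is then read off from the definition of $s$; the differences are minor: you prove the biconditional for every $n$ and split on whether $s(G)$ is finite (the paper splits on $s(G\circ H)$), so you need only the easy direction $H=K\Rightarrow G\circ(H+X)=G\circ(K+X)$ of the proposition, and you treat the degenerate cases $G\equiv 0$ and $H\equiv 0$ explicitly.
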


\begin{proof}
Let $G$ and $H$ be any impartial games with $s(G \circ H) \ne \infty$.

We know that $(G \circ H) \circ *s(G \circ H) = 0$.

By Theorem~\ref{theorem_spit_sum} we have $G \circ (*s(G \circ H)+H) = 0$ that implies $*s(G)=*s(G \circ H)+H$ 
which gives us $H=*s(G)+*s(G \circ H)=*(s(G) \oplus s(G \circ H))$.

Then $g(H)=s(G) \oplus s(G \circ H)$ and finally $s(G \circ H) = s(G) \oplus g(H)$.

If $s(G \circ H) = \infty$, it means that there is no $n$ such that:
$(G \circ H) \circ *n = 0$ .

Then by Theorem~\ref{theorem_spit_sum} there is no $n$ such that $G \circ (*n+H) = 0$ but $*n+H$ can be equal to any $*m$ by changing $n$ then by Proposition~\ref{righequality} there is no $m$ such that $G\circ *m =0$ which means that $s(G)=\infty$.
 \end{proof}

\begin{remark}
    For all game $G$, $g(G)=0 \iff s(G)=0$.
\end{remark}

\begin{proof}
    $g(G)=0 \iff G=0 \iff G\circ0=0\iff s(G)=0$.
\end{proof}

\begin{theorem}
\label{theorem:s=1}
    For any impartial game $G$, such as there exist three impartial games $G_1, H_1$ and $H_2$ with $g(H_1)=g(H_2)=1$ and $G\equiv G_1+H_1+H_2$, then $g(G)=1 \iff s(G)=1$.
\end{theorem}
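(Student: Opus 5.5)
The plan is to compare $G\circ *$ with the ordinary sum $G+*$. By definition $s(G)=1$ means $G\circ *=0$, and $g(G)=1$ means $G+*=0$. The two games differ in exactly one way: in $L\circ *$ the move on $*$ (the pass) is forbidden once the left component $L$ is terminal. In $L+*$ it stays available, and with $L\equiv 0$ one has $L+*\ne 0$ but $L\circ *=0$. So the whole proof amounts to showing that the structure $G\equiv G_1+H_1+H_2$ with $g(H_1)=g(H_2)=1$ lets the winning player of $G+*$ keep play away from the "bad" positions $0\circ *$.

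First I would note that $G$ is nonempty and has no option $\equiv 0$. Indeed $g(H_i)=1$ forces $H_i\not\equiv 0$, and any single move leaves at least one of $H_1,H_2$ untouched. Hence, by Theorem~\ref{mexrule}, $s(G)={\rm mex}\{s(G')\mid G'\in G\}$. By the Remark $g(G')=0\iff s(G')=0$, so $G$ has an option of split value $0$ iff it has an option of Grundy value $0$. It therefore remains to show that no option of $G$ has split value $1$ when $g(G)=1$, and conversely.

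For $(\Rightarrow)$ I would proceed by strategy stealing rather than via the mex rule, because options such as $G_1+H_1'+H_2$ no longer have the required form and the induction does not apply to them directly. Assume $g(G)=1$. The second player in $G\circ *$ follows the Sprague--Grundy winning strategy of $G_1+(H_1+H_2)+*$, where $H_1+H_2=0$ is handled as one component. Whenever the opponent moves inside $H_1+H_2$, the second player answers inside $H_1+H_2$ so as to return it to value $0$; he prefers answers that keep both $H_1$-part and $H_2$-part nonempty while the pass is unused. The invariant to maintain is this: after each of the second player's moves, either the pass has been used, or the left component still contains two nonempty parts. Under that invariant the opponent can never move to $0\circ *$ with the pass still available, so every position reached is one where $+$ and $\circ$ have the same outcome, and the $G+*$ strategy wins.

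For $(\Leftarrow)$ I would argue by contrapositive. If $g(G)=0$ then $s(G)=0\neq 1$ by the Remark. If $g(G)\ge 2$, the first player wins $G\circ *$ by moving to an option $G'$ with $g(G')=1$; I would try to choose this move in $G_1$, or in $H_1+H_2$ so as to keep the two-unit structure, and then apply the $(\Rightarrow)$ direction to $G'$, which gives $G'\circ *=0$.

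The main obstacle is the invariant in $(\Rightarrow)$. A move inside $H_1$ may destroy its "value $1$" shape, and the local reply in $H_1+H_2$ that restores value $0$ might be forced to empty a component. I would first try to strengthen the induction hypothesis to the statement "a left component that contains a nonempty sub-sum of Grundy value $0$ made of at least two nonempty games behaves identically under $\circ *$ and $+*$". I would then check that the local Nim-type reply can always be chosen to preserve it. The parity of the remaining moves in $H_1+H_2$ is the fallback tool if direct choice fails.
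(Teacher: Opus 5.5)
\textbf{There is a genuine gap.} The $(\Rightarrow)$ direction is never established, and the route you sketch for it fails as stated.

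\emph{The local reply rule loses.} Your rule ``answer a move inside $H_1+H_2$ inside $H_1+H_2$ so as to restore value $0$'' already loses for $G=*+*+*$ with $G_1=H_1=H_2=*$. If the opponent moves $H_1\to 0$, your only reply inside $H_1+H_2$ is $H_2\to 0$. That leaves $*\circ *$, and the opponent moves to $0\circ *\equiv 0$ and wins.

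\emph{The proposed strengthening is false.} Take $L=*+*2+*2$. It contains the sub-sum $*2+*2$ of Grundy value $0$ made of two nonempty games, and $L+*=0$. Yet $L\circ *\neq 0$: from $L\circ *$ one moves to $(*+*2)\circ *$, which is a $\mathcal{P}$-position because $s(*+*2)=0\oplus 1=1$.

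\emph{The underlying mistake.} You merged $H_1+H_2$ into a single component of value $0$. What the proof needs, and must preserve, is that \emph{each} component separately has Grundy value exactly $1$.

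\textbf{The missing idea.} The mex-rule induction you abandoned does go through, with a one-move look-ahead. Run it on the biconditional for smaller games, and note $g(G)=g(G_1)$. Assume $g(G_1)=1$.
\begin{itemize}
\item Consider an option $G_1+H_1'+H_2$, where $g(H_1')\neq 1$.
\begin{itemize}
\item If $g(H_1')=0$, this option has Grundy value $0$, hence split value $0$.
\item If $g(H_1')\ge 2$, then $H_1'$ has an option $H_1''$ with $g(H_1'')=1$. The game $G_1+H_1''+H_2$ is of the required form with Grundy value $1$, so by induction its split value is $1$. Hence $s(G_1+H_1'+H_2)\neq 1$: it is either $\infty$ or a mex over a set containing $1$.
\end{itemize}
\item The options $G_1'+H_1+H_2$ are of the required form with Grundy value $g(G_1')\neq 1$, so the induction gives split value $\neq 1$.
\item Some option has Grundy value $0$, hence split value $0$, and $0\notin G$.
\end{itemize}
So the mex rule gives $s(G)=1$. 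Your $(\Leftarrow)$ argument then works, always moving inside $G_1$.

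\textbf{The same idea as a strategy.} While the pass is unused, keep the left side of the form $A+B+C$ with all three components of Grundy value $1$.
\begin{itemize}
\item If the opponent passes, $A+B+C$ has value $1\neq 0$ and you win by ordinary play.
\item If the opponent drops a component to Grundy value $0$, you pass. This is legal because two nonempty components remain, and it leaves an ordinary game of value $0$.
\item If the opponent raises a component to value $\ge 2$, move that component back to value $1$.
\end{itemize}
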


This theorem gives all $\mathcal{P}$-positions and $\mathcal{N}$-positions of game with a pass as long as we have at least two component of Grundy value $1$ because a split value of $1$ signify that it is a $\mathcal{P}$-position in the game with a pass. 

\begin{figure}[!ht]
\centering
\resizebox{1\textwidth}{!}{%
\begin{circuitikz}
\tikzstyle{every node}=[font=\fontsize{18.2pt}{23.7pt}\selectfont]
\draw  (0,0) rectangle (1,1);
\draw  (1,0) rectangle (2,1);
\draw  (2,0) rectangle (3,1);
\draw  (3,0) rectangle (4,1);
\draw  (4,0) rectangle (5,1);
\draw  (5,0) rectangle (6,1);
\draw  (6,0) rectangle (7,1);
\draw [ fill={rgb,255:red,0; green,0; blue,0}, fill opacity=1] (7,0) rectangle (8,1);
\node [font=\fontsize{18.2pt}{23.7pt}\selectfont, inner xsep=0.080cm, inner ysep=0.085cm, rounded corners=0.020cm] at (1.25,0.5) {1};
\node [font=\fontsize{18.2pt}{23.7pt}\selectfont, inner xsep=0.080cm, inner ysep=0.085cm, rounded corners=0.020cm] at (0.25,0.5) {1};
\node [font=\fontsize{18.2pt}{23.7pt}\selectfont, inner xsep=0.080cm, inner ysep=0.085cm, rounded corners=0.020cm] at (3.25,0.5) {1};
\node [font=\fontsize{18.2pt}{23.7pt}\selectfont, inner xsep=0.080cm, inner ysep=0.085cm, rounded corners=0.020cm] at (2.25,0.5) {1};
\node [font=\fontsize{18.2pt}{23.7pt}\selectfont, inner xsep=0.080cm, inner ysep=0.085cm, rounded corners=0.020cm] at (6.25,0.5) {1};
\end{circuitikz}
}%
\caption{Nim position that can be written as $G_1+H_1+H_2$ with $g(G_1)=g(H_1)=g(H_2)=1$ by taking $G_1\equiv*7+*6$, $H_1\equiv*5+*4$ and $H_2\equiv*$}
\label{fig:2}
\end{figure}

\begin{figure}[!ht]
\centering
\resizebox{1\textwidth}{!}{%
\begin{circuitikz}
\tikzstyle{every node}=[font=\fontsize{18.2pt}{23.7pt}\selectfont]
\draw  (0,0) rectangle (1,1);
\draw  (1,0) rectangle (2,1);
\draw  (2,0) rectangle (3,1);
\draw  (3,0) rectangle (4,1);
\draw  (4,0) rectangle (5,1);
\draw  (5,0) rectangle (6,1);
\draw  (6,0) rectangle (7,1);
\draw [ fill={rgb,255:red,0; OliveGreen,0; blue,0}, fill opacity=1] (7,0) rectangle (8,1);
\node [font=\fontsize{18.2pt}{23.7pt}\selectfont, inner xsep=0.080cm, inner ysep=0.085cm, rounded corners=0.020cm] at (0.25,0.5) {1};
\node [font=\fontsize{18.2pt}{23.7pt}\selectfont, inner xsep=0.080cm, inner ysep=0.085cm, rounded corners=0.020cm] at (6.25,0.5) {2};
\end{circuitikz}
}%
\caption{Nim position that can be written as $G_1+H_1+H_2$ with $g(H_1)=g(H_2)=1$ and $g(G_1)\ne1$ by taking $G_1\equiv*7$, $H_1\equiv*$ and $H_2\equiv*$}
\label{fig:3}
\end{figure}

For example we know that the game represented in Figure~\ref{fig:2} with a pass is a $\mathcal{P}$-position and the one represented in Figure~\ref{fig:3} with a pass is an $\mathcal{N}$-position.

\begin{proof}
We prove the claim by induction on $G$.

From $G\equiv G_1+H_1+H_2$, first player can play to:
\begin{itemize}
    \item $G_1'+H_1+H_2$ with $G_1'$ an option from $G_1$,
    \item $G_1+H_1'+H_2$ with $H_1'$ an option from $H_1$,
    \item $G_1+H_1+H_2'$ with $H_2'$ an option from $H_2$.
\end{itemize}

By symmetry for the rest of the proof we will ignore the move on $H_2$ to $H_2'$ as $H_1$ and $H_2$ play the same role.

If $g(G)=1$, then $g(G_1'+H_1+H_2)\ne 1$ and by induction $s(G_1'+H_1+H_2)\ne 1$. 
If $g(H_1')>1$ there is a move from $G_1+H_1'+H_2$ to $G_1+H_1''+H_2$ with $g(H_1'')=1$ and $H_1''$ is an option of $H_1'$, but by induction $s(G_1+H_1''+H_2)=1$ and then $s(G_1+H_1'+H_2)\neq1$. If $g(H_1')=0$, then $g(G_1+H_1'+H_2)=0$ and $s(G_1+H_1'+H_2)=0$. In addition, from the mex rule, $G_1+H_1+H_2$ has at least one option whose Grundy value is $0$. Thus $G_1+H_1+H_2$ has an option whose split value is $0$. By the mex rule and $0$ cannot be an option of $G$ as $G$ has at least two non empty components, we have that $s(G)=1$.

If  $g(G)\ne1$, then either $g(G)=0$ and then $s(G)=0$ or $g(G_1)>1$ and there exists an $G_1'$ an option of $G_1$ as $g(G'_1)=1$ but then by induction $s(G'_1+H_1+H_2)=1$ and then $s(G)\ne1$. 
    
\end{proof}

\section{Results for split sum of Nim game}
\label{sec_split_and_nim}

In this section, we show some interesting results on Nim game and split sum, which allows us to compute easily the $\mathcal{P}$-positions of some Nim games.

For this part we write $n*m$ for $*m+*m+\cdot \cdot \cdot+*m$ with $n$ times $*m$.

\begin{theorem}
For any positive integers $n$ and $m$,
$s(*n+*m)=(n-1)\oplus (m-1)$.
\end{theorem}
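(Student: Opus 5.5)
Since $n,m\ge 1$, the game $*n+*m$ is nonempty, so its split value is defined. The plan is induction on $n+m$, using the mex rule of Theorem~\ref{mexrule}.

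First we deal with the hypothesis of that theorem, which requires $0\notin *n+*m$. The options of $*n+*m$ are $*k+*m$ with $0\le k<n$ and $*n+*k$ with $0\le k<m$. Such an option is the empty game only if both summands are empty, and this never happens because one summand is still $*n$ or $*m$ with a positive index. So Theorem~\ref{mexrule} gives
\[
s(*n+*m)={\rm mex}\bigl(\{s(*k+*m)\mid 0\le k<n\}\cup\{s(*n+*k)\mid 0\le k<m\}\bigr).
\]

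Next we need the split values of the options. Options with $k\ge1$ fall under the induction hypothesis, so $s(*k+*m)=(k-1)\oplus(m-1)$ and $s(*n+*k)=(n-1)\oplus(k-1)$. The options with $k=0$ are single nimbers $*m$ and $*n$, so we need the auxiliary fact $s(*p)=p-1$ for $p\ge1$. We prove it by induction on $p$ with the same mex rule:
\begin{itemize}
\item For $p=1$, the only option of $*$ is $0$, so $s(*)=\infty$ by Theorem~\ref{mexrule}, not $0$.
\end{itemize}
This breaks the naive formula. Indeed $*\circ *$ is a first-player win, and $*\circ*k$ is also a win for the first player for every $k$ (move to $0$). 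So $s(*)=\infty$, and the auxiliary fact fails for $p=1$. For $p\ge2$ the game $*p$ has $0$ as an option, so $s(*p)=\infty$ as well. Hence $s(*p)=\infty$ for all $p\ge1$, and the options with $k=0$ contribute $\infty$, which does not affect the mex.

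The mex therefore reduces to
\[
s(*n+*m)={\rm mex}\bigl(\{(k-1)\oplus(m-1)\mid 1\le k<n\}\cup\{(n-1)\oplus(k-1)\mid 1\le k<m\}\bigr).
\]
Substituting $a=n-1$, $b=m-1$ and $i=k-1$, this is
\[
{\rm mex}\bigl(\{i\oplus b\mid 0\le i<a\}\cup\{a\oplus i\mid 0\le i<b\}\bigr).
\]
This is exactly the Sprague--Grundy computation of $g(*a+*b)$ from its options $*i+*b$ and $*a+*i$, so it equals $a\oplus b$. We can cite this as the classical identity $*a+*b=*(a\oplus b)$ together with the mex characterisation of Grundy values, rather than redoing the bit argument. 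That gives $s(*n+*m)=(n-1)\oplus(m-1)$.

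The induction hypothesis as stated covers only pairs with both indices positive. That is all we need, since the $k=0$ options are handled separately through $s(*p)=\infty$. The base case $n=m=1$ fits this pattern. Its options are $*$ and $*$, both with split value $\infty$, so the mex of $\emptyset$ is $0=0\oplus0$. Consistently, $(*+*)\circ 0=*+*=0$.

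The step most likely to cause trouble is the bookkeeping of the $k=0$ options. We must check that they contribute $\infty$, and hence nothing, rather than $p-1$. Once that is settled, the result is just a shift by one of the standard Nim-sum mex identity.
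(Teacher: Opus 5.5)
Your proof is correct and is essentially the paper's argument. It runs by induction on $n+m$ via the mex rule (Theorem~\ref{mexrule}). The single-nimber options contribute $s(*p)=\infty$, and after the shift $a=n-1$, $b=m-1$ the remaining values give the mex computation of $g(*a+*b)=a\oplus b$. In a final write-up, drop the abandoned ``auxiliary fact'' $s(*p)=p-1$ and state directly that $s(*p)=\infty$ because $0\in *p$; your explicit check that $0\notin *n+*m$ is a detail the paper leaves implicit.
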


\begin{figure}[!ht]
\centering
\resizebox{1\textwidth}{!}{%
\begin{circuitikz}
\tikzstyle{every node}=[font=\fontsize{18.2pt}{23.7pt}\selectfont]
\draw  (0,0) rectangle (1,1);
\draw  (1,0) rectangle (2,1);
\draw  (2,0) rectangle (3,1);
\draw  (3,0) rectangle (4,1);
\draw  (4,0) rectangle (5,1);
\draw  (5,0) rectangle (6,1);
\draw  (6,0) rectangle (7,1);
\draw [ fill={rgb,255:red,0; green,0; blue,0}, fill opacity=1] (7,0) rectangle (8,1);
\node [font=\fontsize{18.2pt}{23.7pt}\selectfont, inner xsep=0.080cm, inner ysep=0.085cm, rounded corners=0.020cm] at (1.25,0.5) {1};
\node [font=\fontsize{18.2pt}{23.7pt}\selectfont, inner xsep=0.080cm, inner ysep=0.085cm, rounded corners=0.020cm] at (3.25,0.5) {1};
\end{circuitikz}
}%
\caption{Example of a Nim game with two piles}
\label{fig:exemple_two_pile}
\end{figure}

For example in the game represented in Figure~\ref{fig:exemple_two_pile}, the split value is equal to $(4-1)\oplus(6-1)=3\oplus5=6$.

\begin{proof}
By Theorem~\ref{mexrule}, $s(*n)=\infty$ and $s(*1+*1)=0$.

By induction on $n+m$, as $s(*k)=\infty$ for all $k$, we have $s(*n+*m)={\rm mex}(\{s(*i+*m),s(*n+*j)|0 \le i < n,0 \le j < m\})={\rm mex}(\{(i-1)\oplus (m-1), (n-1)\oplus (j-1)|0 < i < n,0 < j < m\})=g(*(n-1)+*(m-1))=(n-1)\oplus (m-1)$.
 \end{proof}

\begin{theorem}
\label{G+*} For all multiple-pile Nim-games $G$ and for all $n \geq 2b(G)+1$,
\[
(G+n*)\circ G = 0 \textrm{ or } (G+(n+1)*)\circ G = 0.
\]
\end{theorem}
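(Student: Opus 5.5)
We reformulate first. By Proposition~\ref{righequality} we may replace the right-hand $G$ by $*g(G)$. So $(G+k*)\circ G=0$ holds if and only if $s(G+k*)=g(G)$. The statement therefore becomes: for $n\ge 2b(G)+1$, the value $g(G)$ lies in $\{s(G+n*),\,s(G+(n+1)*)\}$.

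\textbf{Step 1: a mirroring strategy.} I would first argue directly on the game $(G+k*)\circ G$. The second player copies any move made in one copy of $G$ by the same move in the other copy, and answers a move on a star by a move on a star. While this pairing lasts, both copies of $G$ stay identical and the parity of the number of remaining stars is preserved. There are two ways the game can end.
\begin{itemize}
\item The right copy becomes empty after the first player's move from a pile $*a$. The game then reduces to $*a+(\text{remaining stars})$ in the left component, with the second player to move.
\item The left component becomes empty. This requires all stars to be exhausted, and then the first player wins by emptying the left copy.
\end{itemize}
The second case is the real danger. The first player can repeatedly spend stars to exhaust them, and then use the rule ``$G\circ H\equiv 0$ if $G\equiv 0$'' to win. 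For this reason pure mirroring cannot be the whole story, and the choice between $n$ and $n+1$ must depend on $G$.

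\textbf{Step 2: induction on $b(G)$ via the mex rule.} I would make the argument rigorous this way, with the claim $P(G)$: for every $k\ge 2b(G)+1$, $g(G)\in\{s(G+k*),s(G+(k+1)*)\}$. I would also try to prove the sharper statement that, for $k\ge 2b(G)+1$, $s(G+k*)$ alternates with period $2$ in $k$, taking the value $g(G)$ exactly on one parity class. Since $G+k*$ has no option equal to $0$ when $k\ge 2$ (it has at least two nonempty components), Theorem~\ref{mexrule} gives
\[
s(G+k*)={\rm mex}\bigl(\{s(G'+k*)\mid G'\in G\}\cup\{s(G+(k-1)*)\}\bigr).
\]
Each option $G'$ has $b(G')\le b(G)-1$, so $k\ge 2b(G)+1 \geq 2b(G')+3$. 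By induction, every $s(G'+k*)$ is therefore already in the periodic regime. In particular, for each $G'$ with $g(G')=j<g(G)$, the value $j$ appears as $s(G'+k*)$ for $k$ of the appropriate parity. We must also check that $g(G)$ itself is never an $s(G'+k*)$. That holds because $G'\ne G$ in value whenever $g(G')=g(G)$ is impossible for an option, and the injectivity in Proposition~\ref{righequality} handles the comparison.

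\textbf{Step 3: the main obstacle.} The hard part is the parity bookkeeping. Different options $G'$ realise their Grundy value $j$ at different parities of $k$, so the mex at a fixed $k$ might miss some $j<g(G)$. The extra term $s(G+(k-1)*)$ must then fill exactly that gap. I would try to show that the set of missing small values forms a single ``parity-defect'' class. The mex then equals $g(G)$ on one parity and a fixed other value on the other parity, which propagates periodicity from $k$ to $k+1$.

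The margin $2b(G)+1$ would be justified by the Step 1 picture. Each level of the induction can delay stabilisation by at most two star moves, one per player. This is why the threshold grows as $2b(G)$, and why I expect the base case $G=0$ and the case $G=*$ (where $(*+*+*)\circ *=0$ but $*\circ *\ne 0$) to calibrate the constant.
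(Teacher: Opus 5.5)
Your reformulation ($(G+k*)\circ G=0$ iff $s(G+k*)=g(G)$, via Proposition~\ref{righequality}) and the mex recursion for $s(G+k*)$ are fine. The gap is exactly at the crux. In Step~2 you claim that $g(G)$ is never of the form $s(G'+k*)$ for an option $G'$. This is false, and Proposition~\ref{righequality} cannot give it: that proposition only says the right argument of $\circ$ matters through its value, whereas $s(G'+k*)=g(G)$ merely says $(G'+k*)\circ *g(G)=0$. Concretely, the mex rule gives $s(m*)=0$ for even $m\ge 2$ and $s(m*)=1$ for odd $m\ge 3$. Hence $s(*2+k*)=2$ for even $k\ge 2$ and $s(*2+k*)=3$ for odd $k\ge 3$. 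So for $G=*3$, the option $*2+k*$ of $G+k*$ has split value $3=g(G)$ for every odd $k\ge 3$. This blocking is precisely why $s(*3+k*)=3$ holds only for even $k$. The real content of the theorem is therefore that $g(G)$ is not blocked at \emph{both} parities, and that every $j<g(G)$ is present at the good parity. Step~3 leaves exactly this as ``I would try to show''. The threshold $2b(G)+1$ is also only motivated heuristically.

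More seriously, nothing in Steps~2--3 uses that $G$ is a Nim game. No argument built only from the mex rule and induction on birthday can work, because the statement fails for general impartial games. Let $F=\{*+*,\,*\}$, so $g(F)=2$. The mex rule gives $s(F+k*)=3$ for even $k\ge 2$ and $s(F+k*)=2$ for odd $k\ge 1$, which is the reverse of the pattern for $*2$. Put $G=\{0,*,*2,F\}$, so $g(G)=3$. For every $k\ge 2$, one of the options $F+k*$ or $*2+k*$ of $G+k*$ has split value $3$. Hence $(G+k*)\circ G\ne 0$ for all $k\ge 2$.

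What is missing is Nim-specific control over which options can carry split value $g(G)$ at which parity. The paper gets this from two lemmas:
\begin{itemize}
\item Lemma~\ref{lemma:game_at_one}: an option $G'$ either satisfies $g(G')=g(G)\oplus 1$, or is paired with an option $G_1'$ such that $g(G')=g(G_1')\oplus 1$ and there is a move between them.
\item Lemma~\ref{GtoG+*}: the inductive statement transfers from $H$ to $H+*$.
\end{itemize}
With these, it runs a contradiction case analysis on the options of $(G+(n+1)*)\circ G$. To complete your route you would need an analogous strengthened invariant. For instance, identify the other-parity value: for $0,*,*2,*3,*4$ it is $g(G)\oplus 1$, with $g(G)$ at even $k$. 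Then prove that no Nim position falls into the reversed regime exhibited by $F$.
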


\begin{figure}[!ht]
\centering
\resizebox{1\textwidth}{!}{%
\begin{circuitikz}
\tikzstyle{every node}=[font=\fontsize{18.2pt}{23.7pt}\selectfont]
\draw  (0,0) rectangle (1,1);
\draw  (1,0) rectangle (2,1);
\draw  (2,0) rectangle (3,1);
\draw  (3,0) rectangle (4,1);
\draw  (4,0) rectangle (5,1);
\draw  (5,0) rectangle (6,1);
\draw  (6,0) rectangle (7,1);
\draw [ fill={rgb,255:red,0; green,0; blue,0}, fill opacity=1] (7,0) rectangle (8,1);
\node [font=\fontsize{18.2pt}{23.7pt}\selectfont, inner xsep=0.080cm, inner ysep=0.085cm, rounded corners=0.020cm] at (0.25,0.5) {1};
\node [font=\fontsize{18.2pt}{23.7pt}\selectfont, inner xsep=0.080cm, inner ysep=0.085cm, rounded corners=0.020cm] at (2.25,0.5) {2};
\node [font=\fontsize{18.2pt}{23.7pt}\selectfont, inner xsep=0.080cm, inner ysep=0.085cm, rounded corners=0.020cm] at (6.25,0.5) {17};
\node [font=\fontsize{18.2pt}{23.7pt}\selectfont, color=OliveGreen, inner xsep=0.080cm, inner ysep=0.085cm, rounded corners=0.020cm] at (0.75,0.5) {1};
\node [font=\fontsize{18.2pt}{23.7pt}\selectfont, color=OliveGreen, inner xsep=0.080cm, inner ysep=0.085cm, rounded corners=0.020cm] at (2.75,0.5) {2};
\end{circuitikz}
}%

\resizebox{1\textwidth}{!}{%
\begin{circuitikz}
\tikzstyle{every node}=[font=\fontsize{18.2pt}{23.7pt}\selectfont]
\draw  (0,0) rectangle (1,1);
\draw  (1,0) rectangle (2,1);
\draw  (2,0) rectangle (3,1);
\draw  (3,0) rectangle (4,1);
\draw  (4,0) rectangle (5,1);
\draw  (5,0) rectangle (6,1);
\draw  (6,0) rectangle (7,1);
\draw [ fill={rgb,255:red,0; green,0; blue,0}, fill opacity=1] (7,0) rectangle (8,1);
\node [font=\fontsize{18.2pt}{23.7pt}\selectfont, inner xsep=0.080cm, inner ysep=0.085cm, rounded corners=0.020cm] at (0.25,0.5) {1};
\node [font=\fontsize{18.2pt}{23.7pt}\selectfont, inner xsep=0.080cm, inner ysep=0.085cm, rounded corners=0.020cm] at (2.25,0.5) {2};
\node [font=\fontsize{18.2pt}{23.7pt}\selectfont, inner xsep=0.080cm, inner ysep=0.085cm, rounded corners=0.020cm] at (6.25,0.5) {18};
\node [font=\fontsize{18.2pt}{23.7pt}\selectfont, color=OliveGreen, inner xsep=0.080cm, inner ysep=0.085cm, rounded corners=0.020cm] at (0.75,0.5) {1};
\node [font=\fontsize{18.2pt}{23.7pt}\selectfont, color=OliveGreen, inner xsep=0.080cm, inner ysep=0.085cm, rounded corners=0.020cm] at (2.75,0.5) {2};
\end{circuitikz}
}%
\caption{Example of two games where Theorem~\ref{G+*} gives us that one of them is a $\mathcal{P}$-position}
\label{fig:exemple_many_star}
\end{figure}

For example this theorem gives us that one of the game represented in Figure~\ref{fig:exemple_many_star} is a $\mathcal{P}$-position.

This theorem can help us find $\mathcal{P}$-positions of Nim game as if we add some $*$ to the game we know that by some time we will have the split value equal to the Grundy value of the Nim game. 

Even if this theorem only gives us possibility of being a $\mathcal{P}$-position, it still gives important information for a game able to go to both position would be an  $\mathcal{N}$-position; for example $(G+n*+*2)\circ G$ is an $\mathcal{N}$-position if $n\ge 2b(G)+1$ and if we know that one of them is not a $\mathcal{P}$-position it gives immediately the result for the other. 
Also, when we reach a $\mathcal{P}$-position where this theorem can be applied, we know that as long as the parity of the number of stars is kept, the position is still a $\mathcal{P}$-position even if we add some stars.


We use the following Lemmas~\ref{lemma:game_at_one} and~\ref{GtoG+*} to prove Theorem~\ref{G+*}.

\begin{lemma}
\label{lemma:game_at_one}
If G is a Nim game and $G'\in G$ then either 
\begin{equation}
\label{hypotesis:nim1bis}
    g(G') = g(G) \oplus 1
\end{equation}
or there exists $G_1'\in G$ such that $g(G') = g(G_1') \oplus 1$ and there is a move either from $G'$ to $G_1'$ or from $ G_1'$ to $G'$.
\end{lemma}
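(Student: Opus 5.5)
The plan is a direct computation, with no induction: write the Nim game in coordinates and pair $G'$ with the option obtained by flipping the last binary digit of the pile that was moved.

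\textbf{Setup.} Write $G \equiv *a_1 + \cdots + *a_k$, so that $g(G) = a_1 \oplus \cdots \oplus a_k$ by Theorem~\ref{theorem:sprague_grundy} and the Nim-sum rule. Every option $G'\in G$ changes exactly one pile, say pile $i$, from $a_i$ to some $b$ with $0\le b<a_i$. Then
\[
g(G') = g(G)\oplus a_i \oplus b .
\]
Hence (\ref{hypotesis:nim1bis}) holds exactly when $a_i\oplus b = 1$, that is, when $a_i$ is odd and $b=a_i-1$.

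\textbf{Candidate for $G_1'$.} Suppose (\ref{hypotesis:nim1bis}) fails. Let $G_1'$ be the position obtained from $G$ by replacing pile $i$ with $b\oplus 1$, leaving the other piles unchanged. We check the three required properties in turn.

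\begin{enumerate}
\item \emph{$G_1'$ is an option of $G$.} This needs $b\oplus1<a_i$. Since $b\oplus 1\in\{b-1,b+1\}$ and $b<a_i$, the only way this can fail is $b\oplus 1=b+1=a_i$. That happens only when $b$ is even and $a_i=b+1$, which gives $a_i\oplus b=1$. This is precisely the excluded case where (\ref{hypotesis:nim1bis}) holds, so here $G_1'\in G$.
\item \emph{The Grundy values match.} We have
\[
g(G_1')=g(G)\oplus a_i\oplus b\oplus 1=g(G')\oplus 1 ,
\]
which is the required relation.
\item \emph{There is a move between them.} $G'$ and $G_1'$ differ only in pile $i$, with sizes $b$ and $b\oplus1$, and these are distinct. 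Whichever of the two has the larger pile $i$ can move to the other by removing one token from that pile. So there is a move from $G'$ to $G_1'$ if $b$ is odd, and from $G_1'$ to $G'$ if $b$ is even.
\end{enumerate}

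\textbf{Main obstacle.} The only delicate point is the first check, that $b\oplus1$ is a legal reduction of $a_i$. The case analysis above shows this fails exactly when (\ref{hypotesis:nim1bis}) holds. The other two checks are routine applications of the Nim-sum rule and of the definition of Nim moves.
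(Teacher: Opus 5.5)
Your proof is correct and takes the same route as the paper. The paper also chooses $G_1'$ by removing one more token from the moved pile when the remaining count $b$ is odd, and one fewer when $b$ is even; this is exactly your $b\oplus 1$. It likewise notes that the ``one fewer'' choice fails to be an option only when Equation~\ref{hypotesis:nim1bis} holds, and your version simply makes the Nim-sum bookkeeping explicit.
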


\begin{proof}
Suppose that Equation~\ref{hypotesis:nim1bis} does not hold.

Because $G$ is a Nim game, we know that moving from $G$ to $G'$ is by removing a certain number of tokens from a Nim head. 

If we have left in this Nim head an odd number of tokens then by removing one more token we have a game $G'_1$ accessible from $G$ and $G'$ and because the only change is on the last bit we know that $g(G') = g(G_1') \oplus 1$.

If we have left in this Nim head an even number of tokens then by removing one less we have a game $G'_1$ accessible from $G$ because it cannot be $G$ by Equation~\ref{hypotesis:nim1bis}, from this game we can access $G'$ and we have $g(G') = g(G_1') \oplus 1$ then the theorem holds.
 \end{proof}

\begin{lemma}
\label{GtoG+*}
Let $b$ be a non negative integer. If for all Nim game $G$ such that $b(G) \leq b$, the statement of Theorem~\ref{G+*} holds then for all $H+*$ with a Nim game $H$ whose birthday equals to $b$, for all $n \geq 2b+1$, $(H+*+n*)\circ (H+*)=0$ or $(H+*+(n+1)*)\circ (H+*)=0$ holds.
\end{lemma}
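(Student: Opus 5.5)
The plan is to turn everything into split values, rewriting both sides of the lemma as statements about $s(H+m*)$. The right-hand argument $H+*$ only matters through its Grundy value, by Proposition~\ref{righequality}. So $(H+*+n*)\circ(H+*)=0$ holds exactly when $s(H+(n+1)*)=g(H)\oplus 1$. Similarly, the hypothesis applied to $H$ itself (allowed, since $b(H)=b$) says: for every $m\ge 2b+1$, $s(H+m*)=g(H)$ or $s(H+(m+1)*)=g(H)$. The target is therefore: for every $n\ge 2b+1$, one of $s(H+(n+1)*)$, $s(H+(n+2)*)$ equals $g(H)\oplus1$. The natural picture is that, once $m\ge 2b+1$, the value $s(H+m*)$ alternates between $g(H)$ and $g(H)\oplus1$ according to the parity of $m$. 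I will aim to prove this stronger alternating statement.

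The main tool is the mex rule of Theorem~\ref{mexrule}. For $m\ge 2$, the game $H+m*$ has no option equal to $0$, since every option still contains at least one $*$ plus something nonempty. The options of $H+m*$ are of two kinds: $H+(m-1)*$, and $H'+m*$ for $H'\in H$. Hence
\[
s(H+m*)={\rm mex}\bigl(\{s(H+(m-1)*)\}\cup\{s(H'+m*)\mid H'\in H\}\bigr).
\]
The hypothesis applies to each $H'$ because $b(H')<b$, and $m\ge 2b+1\ge 2b(H')+1$. So I may assume inductively (on $b$, restating the lemma and the theorem together) that $s(H'+m*)$ alternates between $g(H')$ and $g(H')\oplus 1$ for these $m$.

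Now fix $m\ge 2b+2$ with $s(H+(m-1)*)=g(H)$; such $m$ exists among the relevant indices by the hypothesis. I want to show $s(H+m*)=g(H)\oplus1$. First I check that all values below $g(H)\oplus 1$ appear among the option values. The value $g(H)$ comes from $H+(m-1)*$. Any other $k<g(H)\oplus1$ can be realised as $g(H')$ or $g(H')\oplus1$ for a suitable Nim move $H'\in H$, since Nim options reach every smaller Grundy value. The parity of $m$ then decides which of the two values $s(H'+m*)$ takes.

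The delicate part is excluding the value $g(H)\oplus 1$ itself. Suppose some $H'\in H$ had $s(H'+m*)=g(H)\oplus 1$. Lemma~\ref{lemma:game_at_one} gives two possibilities. Either $g(H')=g(H)\oplus1$, or $H'$ is paired, by a single move in one direction or the other, with another $H_1'\in H$ satisfying $g(H')=g(H_1')\oplus 1$. I will use the mex relation between $H'+m*$ and $H_1'+m*$, together with the parity alternation for $H'$ and $H_1'$, to derive a contradiction with the parity phase forced by $s(H+(m-1)*)=g(H)$.

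I expect this parity bookkeeping to be the main obstacle: one must show that all options $H'$ are "in phase" with $H$, i.e.\ the parity at which $s(H'+m*)=g(H')$ matches that of $H$. I would first try to establish this phase agreement by a separate induction, which is exactly where the bound $2b+1$ (roughly two stars per unit of birthday) should be consumed. Once the strong alternation is proved, the two required cases for $n$ and $n+1$ follow immediately.
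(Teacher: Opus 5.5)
Your translation into split values is correct: by Proposition~\ref{righequality} the goal is that, for each $n\ge 2b+1$, one of $s(H+(n+1)*)$, $s(H+(n+2)*)$ equals $g(H)\oplus 1$. It therefore suffices to show $s(H+m*)=g(H)\oplus1$ whenever $m-1\ge 2b+1$ and $s(H+(m-1)*)=g(H)$. But the proof stops exactly at its crux. You make both remaining steps depend on every option $H'$ being ``in phase'' with $H$, and then only announce that you would try to prove this by a separate induction. Nothing you have supplies it. Your inductive hypotheses give each $H'$ its own phase. The only phase relations you can read off directly are between games joined by a move that flips the last bit of the Grundy value.

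Without this, your covering step is unjustified. An option $H'$ with $g(H')=k$ only gives $s(H'+m*)\in\{k,k\oplus1\}$, so a value $k<g(H)\oplus1$ may be missed. Your exclusion step, meanwhile, reaches for Lemma~\ref{lemma:game_at_one} where it is not needed.

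The global phase claim is unnecessary; two local arguments finish the proof.

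\emph{Exclusion.} Suppose $s(H'+m*)=g(H)\oplus1$. Since $g(H')\ne g(H)$, you must have $g(H')=g(H)\oplus1$ and $s(H'+m*)=g(H')$. By alternation, $s(H'+(m-1)*)=g(H')\oplus1=g(H)$. But $H'+(m-1)*$ is an option of $H+(m-1)*$, and by Theorem~\ref{mexrule} a game never shares a finite split value with one of its options. So the second alternative of Lemma~\ref{lemma:game_at_one} plays no role here.

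\emph{Covering.} Take $k<g(H)\oplus1$ with $k\ne g(H)$; one checks that $k,\,k\oplus1<g(H)$. Pick $H'\in H$ with $g(H')=k$. If $s(H'+m*)=k\oplus1$, Lemma~\ref{lemma:game_at_one} gives $H_1'\in H$ with $g(H_1')=k\oplus1$, joined to $H'$ by a move. Then $s(H_1'+m*)\ne k\oplus1$, hence $s(H_1'+m*)=k$. This is where the lemma is genuinely needed; the paper uses it the same way for Equation~\ref{eq:3}.

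With these two facts, the value $g(H)$ supplied by $H+(m-1)*$, and $0\notin H+m*$, the mex rule gives $s(H+m*)=g(H)\oplus1$. The index bounds for the options' alternation hold, since $m-1\ge 2b+1\ge 2b(H')+3$.
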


\begin{proof}
We use induction on $b$.

From the assumption, for any $n \ge 2b+1$, one of $(H + (n+1)*) \circ H = 0$ or $(H + n*) \circ H = 0$ holds.

Suppose that $(H+(n+1)*)\circ H= 0$.

If $(H+*+(n+1)*)\circ (H+*) \ne 0$ then one of the following equations holds:

\begin{equation}
\label{eq:1}
(H+*+n*)\circ (H+*) = 0  ,
\end{equation}
\begin{equation}
\label{eq:2}
(H'+*+(n+1)*)\circ (H+*) = 0  \text{ with $H'$  an option of $H$,}
\end{equation}
\begin{equation}
\label{eq:3}
        (H+*+(n+1)*)\circ (H'+*) = 0 \text{ with $H'$  an option of $H$,}
\end{equation}
\begin{equation}
\label{eq:4}
        (H+*+(n+1)*)\circ H= 0  .
\end{equation}

Equation~\ref{eq:1} gives us a contradiction as from $(H+*+n*)\circ (H+*)$ we can go to $(H+*+n*)\circ H=0$.

Equation~\ref{eq:4} gives us a contradiction by answering in $*$ and then is not equal to $0$.

We separate the two following cases if this equation holds or not:
\begin{equation}
\label{hypotesis:nim1}
    g(H') = g(H) \oplus 1
\end{equation}

If Equation~\ref{hypotesis:nim1} does not hold then by Lemma~\ref{lemma:game_at_one} there exists an $H_1' \in H$ and $g(H') = g(H_1') \oplus 1$ and there is a move either from $H'$ to $H_1'$ or from $ H_1'$ to $H'$.\\

We have three cases:
\begin{itemize}
    \item [(i)] Equation~\ref{eq:2} holds and Equation~\ref{hypotesis:nim1} does not hold: We both have $(H'+*+(n+1)*)\circ (H'+*) \ne 0$ and $(H'+*+(n+1)*)\circ H' \ne 0$. 
By hypothesis, first inequality gives us $(H'+*+(n+2)*)\circ (H'+*) = 0$ and second inequality implies $(H'+*+(n+2)*)\circ H'= 0$ if $n+1 \ge 2b + 1+1 >  2b(H'+*) + 1$.
However, there is a move from $(H'+*+(n+2)*)\circ (H'+*)$ to $(H'+*+(n+2)*)\circ H'$ then one of them is not $0$ and we have a contradiction. 
    \item [(ii)] Equation~\ref{eq:3} holds and Equation~\ref{hypotesis:nim1} does not hold: We both have $(H'+*+(n+1)*)\circ (H'+*) \ne 0$ and $(H_1'+*+(n+1)*)\circ (H'+*) \ne 0$. 
By hypothesis, first inequality gives us $(H'+*+(n+2)*)\circ (H'+*) = 0$ and second inequality implies $(H_1'+(n+3)*)\circ (H'+*)= 0$ (because $H'+*=H_1'$) if $n \ge 2b+1$. But this is impossible because there is a move from $H'$ to $H_1'$ or from $ H_1'$ to $H'$. Then we have a contradiction. 
    \item [(iii)] Equation~\ref{eq:2} or Equation~\ref{eq:3} holds and Equation~\ref{hypotesis:nim1} holds: Either $(H'+*+(n+1)*)\circ (H'+*) = 0$ and then it contradicts Equation~\ref{eq:2} or Equation~\ref{eq:3}, or $(H'+*+n*)\circ (H'+*) = 0$.
But we have supposed that $(H+(n+1)*)\circ H= 0$ and $(H'+*+n*)\circ (H'+*) = (H'+*+n*)\circ H \ne 0$. We have a contradiction. 
\end{itemize}

If $(H+n*)\circ H= 0$, except for the case (iii), the proof is verbatim  from the proof for $(H+(n+1)*)\circ H= 0$ after subtracting a $*$ in the left part of every corresponding equations.
If $(H'+*+n*)\circ (H+*) = 0$ or $(H+*+n*)\circ (H'+*) = 0$ with $H'$ an option of $H$ such as $g(H') = g(H) \oplus 1$, then $(H'+*+(n+1)*)\circ (H'+*) = 0$ as $(H'+*+n*)\circ (H'+*) = 0$ contradicts $(H'+*+n*)\circ (H+*) = 0$ or $(H+*+n*)\circ (H'+*) = 0$ and $n\ge 2b(H)+1\ge2b(H'+*)+1$.
However, we have supposed that $(H+n*)\circ H= 0$ which means that $(H+(n+2)*)\circ H= 0$ and $(H'+*+(n+1)*)\circ (H'+*) = (H'+(n+2)*)\circ H \ne 0$. We have a contradiction. 
 \end{proof}

\begin{proof}[Proof of Theorem~\ref{G+*}] 

We proceed by induction on $b(G)$.

If $(G+(n+1)*)\circ G \ne 0$ then one of the following equation holds:

\begin{equation}
\label{eq:5}
(G+n*)\circ G = 0,  
\end{equation}
\begin{equation}
\label{eq:6}
(G'+(n+1)*)\circ G = 0  \text{ with $G'$ an option of $G$,}
\end{equation}
\begin{equation}
\label{eq:7}
        (G+(n+1)*)\circ G' = 0 \text{ with $G'$ an option of $G$.}
\end{equation}

Equation~\ref{eq:5} gives us what we want.

We separate the two following cases if this equation holds or not:
\begin{equation}
\label{hypotesis:nim2}
    g(G') = g(G) \oplus 1
\end{equation}

If Equation~\ref{hypotesis:nim2} does not hold then by Lemma~\ref{lemma:game_at_one} there exists an $G_1' \in G$ and $g(G') = g(G_1') \oplus 1$ such that is a move either from $G'$ to $G_1'$ or from $ G_1'$ to $G'$.\\

Then, if Equation~\ref{eq:6} holds and Equation~\ref{hypotesis:nim2} does not hold, we both have $(G'+(n+1)*)\circ G' \ne 0$ and $(G'+(n+1)*)\circ (G'+*) \ne 0$. 
By induction hypothesis, first inequality gives us $(G'+n*)\circ G' = 0$ and second inequality implies by Lemma~\ref{GtoG+*} $(G'+n*)\circ (G'+*)= 0$.
But there is a move from $(G'+n*)\circ (G'+*)$ to $(G'+n*)\circ G'$ then one of them is not $0$ and we have a contradiction. \\

If Equation~\ref{eq:7} holds and Equation~\ref{hypotesis:nim2} does not hold, we both have $(G'+(n+1)*)\circ G' \ne 0$ and $(G_1'+(n+1)*)\circ G' \ne 0$. 
By induction hypothesis, first inequality gives us $(G'+n*)\circ G' = 0$ and second inequality implies by Lemma~\ref{GtoG+*} $(G_1'+n*)\circ G'= 0$ (because $G'=G_1'+*$). But this is impossible because there is a move from $G'$ to $G_1'$ or from $ G_1'$ to $G'$. Then we have a contradiction. \\

To finish if Equation~\ref{eq:6} or Equation~\ref{eq:7} holds and Equation~\ref{hypotesis:nim2} holds, if $(G+n*)\circ G \ne 0$ then either $(G+(n-1)*)\circ G = 0$ or $(G_2'+n*)\circ G = 0$ or $(G+n*)\circ G_2' = 0$ with $G_2'$ an option of $G$ but by a similar discussion to those in the two previous paragraphs by subtracting a $*$ in the left part of every corresponding equations, we have a contradiction or we have one of the following equations:
\begin{itemize}
    \item $(G+(n-1)*)\circ G = 0$,
    \item there is a $G_2'$ option of $G$ such that $g(G_2') = g(G) \oplus 1$ and $(G_2'+n*)\circ G_2' \ne 0$. 
\end{itemize}

However, in the latter case, $G'$ and $G_2' $ do not play on the same Nim head because both of them have the same Grundy value and are different option from $G$ as $(G_2'+n*)\circ G_2' \ne 0$ and $(G'+n*)\circ G'=0$ by induction on $b(G)$ and $(G'+(n+1)*)\circ G'\ne 0$. As $g(G')=g(G)\oplus 1$ we know that the move from $G$ to $G'$ has only change the rightmost bit in the binary writing of the size of one pile because if it is not the case the difference between $g(G)$ and $g(G')$ would be greater. The same argument works for $G_2'$. We can then from $G'$ do the same move as from $G$ to $G_2'$ which gives us an option $G''$ of $G'$ and $G_2'$ that have the same Grundy value as $G$. However, as $(G'+n*)\circ G'=0$ and either Equation~\ref{eq:6} or Equation~\ref{eq:7} holds, we have $(G''+n*)\circ G'=(G''+*+(n-1)*)\circ (G''+*) \ne 0$ and as $(G_2'+(n+1)*)\circ G_2'=0$ by induction on $b(G)$, we have $(G''+(n+1)*)\circ G_2'=(G''+*+n*)\circ (G''+*) \ne 0$ because $G''+*=G'=G_2'$ then this contradicts that $G''+*$ verify the theorem. \\

It follows that either $(G+(n+1)*)\circ G = 0$ or $(G+n*)\circ G = 0$ or $(G+(n-1)*)\circ G = 0$. However the two first cases give us the theorem. If we have $(G+(n-1)*)\circ G = 0$ and not $(G+n*)\circ G = 0$ or $(G+(n+1)*)\circ G = 0$ then by the previous argument there is a $G'$ such that $g(G') = g(G) \oplus 1$, and $(G'+n*)\circ G' = 0$. By playing from $G$ to $G'$ we arrive to $(G'+(n-1)*)\circ G=(G'+*+(n-2)*)\circ (G'+*)\neq 0$ and we have $(G'+*+(n-1)*)\circ (G'+*)=0$ as $b(G')\le b(G)-1$ so $n-2\ge 2b(G)+1-2 \ge 2b(G')+1$ and we can use Lemma~\ref{GtoG+*}. However, $(G'+*+(n-1)*)\circ (G'+*)$ has an option $(G'+n*)\circ G'$ then both cannot be equal to $0$ and we have a contradiction. Then either $(G+(n+1)*)\circ G = 0$ or $(G+n*)\circ G = 0$. 
\end{proof}

\section{Modulo Strong Equality}
\label{sec_modulo_strong_equality}

The usual definition of equality in combinatorial game theory does not work well with split sum as two games can be equal but can have different split sum. Then we seek a stronger property of equality. 

With that in mind we try to define a strong equality that will imply the same behavior with split sum.

We observe that this strong equality is too strong to find anything. Then we introduce this equality but modulo the nimbers.

\begin{definition}
Let $G$ and $H$ be impartial games, $G$ is strongly equal to $H$ if and only if for all impartial games $X$ and $Y$, the outcomes of $(G+X)\circ Y$ and $(H+X)\circ Y$ are the same.

$G$ and $H$ are strongly equal modulo a set $A$ of games if and only if for all games $X$ in $A$ and $Y$ in $A$, the outcomes of $(G+X)\circ Y$ and $(H+X)\circ Y$ are the same.
\end{definition}

We analyze the games by the strong equality modulo the nimbers.

This strong equality modulo the nimbers is exactly defined by a sequence that starts with the Grundy value and acts as a generalization of Grundy value and split value.

\begin{definition}
For an impartial game $G$, $g_G(n)$ is the integer such that $(G+*g_G(n))\circ *n=0$. We name $g_G$ the extended Grundy function of $G$.

$s_G(n)$ is the integer, if it exists and is unique, such that $(G+*n)\circ *s_G(n)=0$. If it does not exist or is not unique, $s_G(n)=\infty$. We name $s_G$ the extended split function of $G$.

\end{definition}

For all $G$ and $n$ there exists an unique $g_G(n)$.

If it exists, it is unique because for all $m_1$ and $m_2$ either from $(G+*m_1)\circ *n$ we can go to $(G+*m_2)\circ *n$ or from $(G+*m_2)\circ *n$ we can go to $(G+*m_1)\circ *n$ then only one of them can be a $\mathcal{P}$-position.

We can confirm that it exists by induction, as for all $m$, from $(G+*m)\circ *n$ all options are like the following $(G+*m_1)\circ *n$ with $m>m_1$, $(G+*m)\circ *n_1$ with $n>n_1$ or $(G'+*m)\circ *n$ with $G'$ an option of $G$. However if a position of the type $(G+*m_1)\circ *n=0$ with $m>m_1$ is a $\mathcal{P}$-position then we have $g_G(n)=m_1$. For each $n_1<n$ and $G'$ option of $G$, there is only one $m$ such as $(G+*m)\circ *n_1=0$ and $(G'+*m)\circ *n=0$ but as there only a finite number of possible $n_1$ and $G'$ then there should have an $m$ such as $(G+*m)\circ *n$ cannot move to a $\mathcal{P}$-position and then is a $\mathcal{P}$-position.

\begin{lemma}
\label{lemma:infty}
    For any impartial short game $G$ and non negative integer $n$, if $s_G(n)=\infty$ then we have $G\equiv 0$ or we have $0\in G$ and $n=0$.

    $(G+*n)\circ *x=0$ has multiple solution if and only if $G\equiv 0$ and $n=0$.
\end{lemma}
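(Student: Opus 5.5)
The plan is to reduce everything to the ordinary split value $s$ of the single game $G+*n$. By definition, $s_G(n)$ is the unique $x$ with $(G+*n)\circ *x=0$, if such a unique $x$ exists. So $s_G(n)=\infty$ happens in exactly two situations: the equation $(G+*n)\circ *x=0$ has several solutions, or it has none. I will handle these separately, and the second statement of the lemma falls out of the first case.

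\emph{Multiple solutions.} First suppose $G+*n\equiv 0$, which happens exactly when $G\equiv 0$ and $n=0$. Then Definition~\ref{splitdefinition} gives $(G+*n)\circ *x\equiv 0$ for every $x$, so every $x$ is a solution. Conversely, suppose $G+*n$ is nonempty and $(G+*n)\circ *x=0=(G+*n)\circ *y$. Write $*x=0+*x$ and $*y=0+*y$ and apply Proposition~\ref{righequality} with the left game $G+*n$. This gives $*x=*y$, hence $x=y$; this is exactly the uniqueness remark following the definition of $s$. So there are multiple solutions if and only if $G\equiv 0$ and $n=0$, which is the second claim of the lemma.

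\emph{No solution.} Here $G+*n$ is nonempty and $s(G+*n)=\infty$. By Theorem~\ref{mexrule}, this happens exactly when $0\in G+*n$. Otherwise the split value is a mex of a finite set and hence finite. The options of $G+*n$ are the games $G'+*n$ with $G'\in G$, and the games $G+*k$ with $k<n$. The first family contains $0$ iff $0\in G$ and $n=0$. The second family contains $0$ iff $G\equiv 0$ and $n\ge 1$, taking $k=0$. Combining this with the multiple-solution case, $s_G(n)=\infty$ forces $G\equiv 0$, or $0\in G$ with $n=0$, as claimed.

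The main subtlety is how to read ``$0\in G$'' in Theorem~\ref{mexrule}. Its proof uses that the first player can move $G\circ H$ to $0$, and this needs an option that is literally the empty game. An option that is merely equal to $0$ is not enough, since $G'\circ H$ with $G'$ a nonempty $\mathcal{P}$-position need not be $0$. So I will read $0\in G$ in both the mex rule and this lemma as ``some option is $\equiv 0$''. With that reading, the identification of when $G+*n$ has an empty option is immediate, and the rest is bookkeeping.
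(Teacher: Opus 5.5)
Your proof is correct, but it takes a somewhat different route from the paper's. The paper argues from scratch, in two steps:
\begin{itemize}
\item \emph{Multiple solutions.} For two solutions $x_1>x_2$, it observes that $(G+*n)\circ *x_2$ is an option of $(G+*n)\circ *x_1$ whenever $G+*n$ is nonempty. So both cannot be $\mathcal{P}$-positions.
\item \emph{No solution.} Each $\mathcal{N}$-position $(G+*n)\circ *x$ must then have a $\mathcal{P}$-option of the form $G'_x\circ *x$, with $G'_x$ an option of $G+*n$. By pigeonhole over the finitely many options, some $G'_x$ has two solutions, so it is empty by the first case.
\end{itemize}

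You instead reduce to the ordinary split value of $G+*n$. You cite Proposition~\ref{righequality} for uniqueness, and Theorem~\ref{mexrule} for the no-solution case: a mex of a finite set is finite, so $s=\infty$ forces a literally empty option. The underlying reason is the same, since finiteness of the option set is exactly what makes the mex finite.

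What your version gains:
\begin{itemize}
\item It is shorter.
\item It spells out when $G+*n$ has an empty option, which the paper passes over in one line.
\item It proves the ``if'' direction of the second claim, which the paper leaves implicit.
\item It yields a full equivalence: $s_G(n)=\infty$ if and only if $G\equiv 0$, or $0\in G$ and $n=0$.
\end{itemize}

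What the paper's version gains is self-containment. Proposition~\ref{righequality} as stated fails when the left game is empty, so your restriction to nonempty $G+*n$ is essential. In that case the direct ``one position is an option of the other'' argument already suffices. The paper's proof of Theorem~\ref{mexrule} is also terse.

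Your remark that $0\in G$ must mean an option literally $\equiv 0$ is right. For example, $\{*+*\}$ has split value $1$, not $\infty$, even though its only option equals $0$.
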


\begin{proof}
    We take a short game $G$ and a non negative integer $n$. Suppose that $s_G(n)=\infty$. 

    If there are multiple integers $x$ such that $(G+*n)\circ *x=0$, we take $x_1$ and $x_2$ two of the $x$ such as $(G+*n)\circ *x=0$. As both $(G+*n)\circ *x_1$ and $(G+*n)\circ *x_2$ are $\mathcal{P}$-positions then there is no move from one to the other which is only possible if $G+*n$ is the empty game. Thus, $G$ is the empty game and $n=0$.

    If there is no non negative integer $x$ such as $(G+*n)\circ *x=0$, then for all $x$ we can find an $G'_x$ an option of $G+*n$ such as $G'_x\circ *x=0$ but as $G+*n$ has only a finite number of options, it means that for some $x_1,x_2$, $G'_{x_1}\equiv G'_{x_2}$. However, this means that $s_{G'_{x_1}}(0)=\infty$ and by the previous paragraph of the proof, $G'_{x_1}\equiv 0$. Thus, ($0\in G$ and $n=0$) or $G\equiv 0$.
    
\end{proof}

\begin{theorem}
    For two impartial short games $G$ and $H$ the following are equivalent:
    \begin{itemize}
        \item for all $n$, $g_G(n)=g_H(n)$,
        \item for all $m$, $s_G(m)=s_H(m)$,
        \item $G$ is strongly equal to $H$ modulo the nimbers.
    \end{itemize}
\end{theorem}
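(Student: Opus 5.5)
The plan is to reduce all three conditions to one combinatorial object: the set
\[P_G=\{(a,b)\in\mathbb{N}^2 \mid (G+*a)\circ *b=0\}.\]
Strong equality modulo the nimbers only involves $X=*a$ and $Y=*b$. The outcome of an impartial game is determined by whether it is a $\mathcal{P}$-position. So $G$ is strongly equal to $H$ modulo the nimbers if and only if $P_G=P_H$. I will then show that the first condition says exactly $P_G=P_H$, and that the second does too.

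For the first condition, I use the paragraph after the definition of $g_G$. There it is shown that for each $b$ there is exactly one $a$ with $(G+*a)\circ *b=0$, namely $a=g_G(b)$. Hence $P_G$ is precisely the graph $\{(g_G(b),b)\mid b\in\mathbb{N}\}$. Two functions have the same graph iff they are equal, so $g_G=g_H$ is equivalent to $P_G=P_H$.

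For the second condition, I look at the columns $C_G(a)=\{b \mid (a,b)\in P_G\}$. By definition, $s_G(a)$ is the unique element of $C_G(a)$ when $|C_G(a)|=1$, and $\infty$ otherwise. Therefore $P_G=P_H$ immediately gives $s_G=s_H$. The converse is the main obstacle, because $s_G(a)=\infty$ loses information: it does not distinguish an empty column from a column with several elements. Lemma~\ref{lemma:infty} resolves this.

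\begin{itemize}
\item If $G\equiv 0$, then $(0+*a)\circ *b$ is $*a\circ *b$. By the mex rule (Theorem~\ref{mexrule}) $s(*a)=\infty$ for $a\ge 1$, so $C_G(a)$ is empty for $a\ge1$. For $a=0$ the game is $0\circ *b\equiv 0$, so $C_G(0)=\mathbb{N}$. Thus $s_G\equiv\infty$.
\item If $G\not\equiv 0$, Lemma~\ref{lemma:infty} says $s_G(a)$ is finite for all $a\ge 1$. It also says multiple solutions occur only when $G\equiv 0$ and $a=0$. Hence an infinite value can only be $s_G(0)$, and in that case $C_G(0)=\emptyset$.
\end{itemize}

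Now suppose $s_G=s_H$. If $G\equiv 0$, then $s_H(1)=s_G(1)=\infty$, which by the second item forces $H\equiv 0$; so $G$ and $H$ are the same game and $P_G=P_H$. By symmetry the same holds if $H\equiv 0$. Otherwise both games are nonempty. Every column with a finite $s$-value is the same singleton for $G$ and $H$. The only possibly infinite column is $a=0$, and there $C_G(0)=C_H(0)=\emptyset$. Hence $P_G=P_H$, and the three conditions are equivalent.

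Throughout, the empty game must be read as $G\equiv 0$ (identical option sets), as in Lemma~\ref{lemma:infty}, rather than $G=0$.
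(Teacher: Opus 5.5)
Your proof is correct and uses the same two ingredients as the paper's. These are the existence and uniqueness of $g_G(n)$, and Lemma~\ref{lemma:infty}, which resolves the ambiguity of $s_G=\infty$: for $G\not\equiv 0$ only the column $a=0$ can give $\infty$, and then it is empty, while $s_H(1)=\infty$ forces $H\equiv 0$. The only difference is organizational: you pass both equivalences through the set of $\mathcal{P}$-positions $P_G$, which makes (1)$\Leftrightarrow$(3) and (3)$\Rightarrow$(2) immediate, whereas the paper proves the cycle (1)$\Rightarrow$(2)$\Rightarrow$(3)$\Rightarrow$(1) with essentially the same case analysis in its (2)$\Rightarrow$(3) step.
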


\begin{proof}
    Suppose that for all $n$, $g_G(n)=g_H(n)$.

We take an integer $m$.

If $s_G(m)\ne \infty$ and $s_H(m)\ne \infty$, we have $(G+*m)\circ *s_G(m)=0$. 
However, then $g_G(s_G(m))=m$ which implies that $g_H(s_G(m))=m$ and $(H+*m)\circ *s_G(m)=0$. Then we have $s_H(m)=s_G(m)$. 

We treat now the case where $s_G(m)= \infty$ or $s_H(m)= \infty$, without loss of generality we assume that $s_G(m)= \infty$.
 We have either there is no integer $x$ such as $(G+*m)\circ *x=0$ or there are multiple integers $x$ such that $(G+*m)\circ *x=0$.

If there is no non negative integer $x$ such as $(G+*m)\circ *x=0$ then for all $x$, $g_G(x)\ne m$ then $g_H(x) \ne m$ and $(H+*m)\circ x \ne 0$. Then $s_G(m)=s_H(m)=\infty$.

If there are multiple non negative integers $x$ such that $(G+*m)\circ *x=0$ then for each of these non negative integers $g_G(x)=g_H(x)= m$ and $(H+*m)\circ *x = 0$. Then $s_G(m)=s_H(m)=\infty$.

Thus, if for all $n$, $g_G(n)=g_H(n)$ then for all $m$, $s_G(m)=s_H(m)$. \\

Suppose that for all $m$, $s_G(m)=s_H(m)$.

We take two non negative integers $n$ and $m_1$.

If $s_G(m_1)=n$, then $(G+*m_1)\circ*n$ is a $\mathcal{P}$-position and because $s_H(m_1)=n$, $(H+*m_1)\circ*n$ is a $\mathcal{P}$-position.

If $s_G(m_1)\ne \infty$ and $s_G(m_1)\neq n$, then $(G+*m_1)\circ*n$ is an $\mathcal{N}$-position because we can go either from  $(G+*m_1)\circ*n$ to $(G+*m_1)\circ*s(m_1)$ or from $(G+*m_1)\circ*s(m_1)$ to $(G+*m_1)\circ*n$. By similar argument $(H+*m_1)\circ*n$ is an $\mathcal{N}$-position.

If $s_G(m_1)=s_H(m_1)=\infty$, we have two cases:

We now treat the case where there are multiple $x$ such as $(G+*m_1)\circ *x=0$ or multiple $x$ such as $(H+*m_1)\circ *x=0$, without loss of generality we consider only the first case. Thus by Lemma~\ref{lemma:infty}, $G\equiv 0$ and for all $m$, $s_G(m)=\infty$ then $s_H(m)=\infty$ and by Lemma~\ref{lemma:infty}, $H\equiv 0$. Thus, $H\equiv G \equiv 0$ and $(G+m_1)\circ n$ and $(H+m_1)\circ n$ have both the same outcome.



If there is no non negative integer $x$ such as $(G+*m_1)\circ *x=0$ or $(H+*m_1)\circ *x=0$, then such as $(G+*m_1)\circ *n$ and $(H+*m_1)\circ *n$ are $\mathcal{N}$-position. 

Thus, if for all $m$, $s_G(m)=s_H(m)$, then $G$ is strongly equals to $H$ modulo nimbers. \\

Suppose that $G$ is strongly equals to $H$ modulo nimbers.

Then for all $n$, as $(G+*g_G(n))\circ *n$ is a $\mathcal{P}$-position then $(H+*g_G(n))\circ *n$ is also a $\mathcal{P}$-position and thus, $g_G(n)=g_H(n)$.
\end{proof}

\begin{theorem}
\label{mexrule2}
For all non-empty $G$, and for all $n,i$,
\[g_G(n) =
    {\rm mex} (\text{\{}g_H(j) \text{, with } (H\in G \text{ and } j=n) \text{ or } (H\equiv G \text{ and } j < n) \text{\}})
\]

\[s_G(i) =
 \begin{cases} 
      \infty & \text{ if } 0 \in G \text{ and } i=0 \\
       {\rm mex} (\text{\{}s_H(j) \text{, with } (H\in G \text{ and } j=i) \text{ or } (H \equiv G \text{ and } j < i) \text{\}})  &  \text{ otherwise} 
   \end{cases}
\]

\end{theorem}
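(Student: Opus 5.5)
The plan is to prove both formulas with the standard mex argument used for Theorem~\ref{mexrule}, applied to the positions $(G+*m)\circ *n$ and reading off which options are $\mathcal{P}$-positions. Since $G$ is non-empty, $G+*m$ is non-empty, so by Definition~\ref{splitdefinition} the options of $(G+*m)\circ *n$ are exactly of three kinds. The first kind is $(G'+*m)\circ *n$ with $G'\in G$. The second is $(G+*m')\circ *n$ with $m'<m$. The third is $(G+*m)\circ *n'$ with $n'<n$. For $n=0$ the position is just $G+*m$, and there is no option of the third kind.

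\textbf{First formula.} Fix $n$. Let $M$ be the mex of the set $S=\{g_{G'}(n) : G'\in G\}\cup\{g_G(j) : j<n\}$. I will show that $(G+*M)\circ *n=0$. Since $g_G(n)$ is unique, as shown after its definition, this gives $g_G(n)=M$.
\begin{itemize}
\item An option $(G'+*M)\circ *n$ is not a $\mathcal{P}$-position, because $g_{G'}(n)\neq M$.
\item An option $(G+*M)\circ *j$ with $j<n$ is not a $\mathcal{P}$-position, because $g_G(j)\neq M$.
\item An option $(G+*m')\circ *n$ with $m'<M$ has $m'\in S$, and it is an $\mathcal{N}$-position. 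If $m'=g_{G'}(n)$, then the player to move goes to $(G'+*m')\circ *n=0$. If $m'=g_G(j)$ for some $j<n$, then the player goes to $(G+*m')\circ *j=0$.
\end{itemize}
The edge case $G'\equiv 0$ is consistent: $(0+*m)\circ *n$ is $*m$ with $m$ on the left, so $g_0(n)=0$, and the argument above does not need any change.

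\textbf{Second formula.} Fix $i$. In the excluded case $0\in G$ and $i=0$, every position $G\circ *x$ has a move to $0$, so no $x$ works and $s_G(0)=\infty$. Otherwise, let $M$ be the mex of the finite values in $T=\{s_{G'}(i) : G'\in G\}\cup\{s_G(j) : j<i\}$. I will show $(G+*i)\circ *M=0$, now treating the right-hand nimber as the variable.
\begin{itemize}
\item An option $(G+*i)\circ *x'$ with $x'<M$ is an $\mathcal{N}$-position, because $x'\in T$. If $x'=s_{G'}(i)$, the player moves to $(G'+*i)\circ *x'=0$. If $x'=s_G(j)$ for some $j<i$, the player moves to $(G+*j)\circ *x'=0$.
\item An option $(G'+*i)\circ *M$ is not a $\mathcal{P}$-position. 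When $s_{G'}(i)$ is finite, this holds because $s_{G'}(i)\neq M$. When $s_{G'}(i)=\infty$, Lemma~\ref{lemma:infty} says there are multiple solutions only if $G'\equiv 0$ and $i=0$, which is the excluded case. So there is no solution at all, and $(G'+*i)\circ *M\neq 0$.
\item An option $(G+*j)\circ *M$ with $j<i$ is not a $\mathcal{P}$-position. When $s_G(j)$ is finite, this holds because $s_G(j)\neq M$. When $s_G(j)=\infty$, Lemma~\ref{lemma:infty} together with $G\not\equiv 0$ forces $0\in G$ and $j=0$. Then $(G+*0)\circ *M=G\circ *M$ has a move to $0$, so it is an $\mathcal{N}$-position.
\end{itemize}
Uniqueness of the solution $x$ follows because for any $x_1\neq x_2$ one of $(G+*i)\circ *x_1$ and $(G+*i)\circ *x_2$ is an option of the other, so at most one is a $\mathcal{P}$-position. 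Hence $s_G(i)=M$.

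The main obstacle is the careful handling of the value $\infty$ in the second formula. The mex ignores the $\infty$ entries, so for each option whose value is $\infty$ I have to check separately that it is still not a $\mathcal{P}$-position; this is exactly where Lemma~\ref{lemma:infty} and the excluded case are used.
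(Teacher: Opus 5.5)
Your proof is correct and uses the same mex argument as the paper, applied to the positions $(G+*m)\circ *n$: the left nimber is the variable for $g_G$ and the right nimber for $s_G$. On the second formula your version is slightly more complete than the paper's. You check explicitly with Lemma~\ref{lemma:infty} that options with $s_{G'}(i)=\infty$ or $s_G(j)=\infty$ are still $\mathcal{N}$-positions, whereas the paper only uses the lemma to get $s_G(i)\neq\infty$ and leaves that point implicit.
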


\begin{proof}
We take $G$ a non-empty game.

We take $m$ an integer.
If there is a $\mathcal{P}$-position among $(G'+*m)\circ *n$ or $(G+*m)\circ *i$ with $G'$ an option of $G$ and $i < n$ then we know that $(G+*m)\circ *n \ne 0$ then $g_G(n) \ne g_{G'}(n)$ and $g_G(n) \ne g_{G}(i)$. 

On the other hand, if all these positions are $\mathcal{N}$-positions then we know that either $(G+*m)\circ *n = 0$ or there exists $j<m$, $(G+*j)\circ *n = 0$ because if not all positions that can be reached from this game are $\mathcal{N}$-positions.

Thus, $g_G(n) = {\rm mex} (\text{\{}g_H(j) \text{, with } (H\in G \text{ and } j=n) \text{ or } (H\equiv G \text{ and } j < n )\text{\}})$. \\

If $ 0 \in G \text{ and } i=0 $, then from $G\circ *n$ there is always a winning move to $0$. Thus, $s_G(0)=\infty$. 

If $0\notin G$ or $i\ne 0$, then by Lemma~\ref{lemma:infty}, $s_G(i)\ne \infty$.  We take $n$ an integer.
If there is a $\mathcal{P}$-position among $(G'+*i)\circ *n$ or $(G+*m)\circ *n$ with $G'$ an option of $G$ and $m < i$ then we know that $(G+*i)\circ *n \ne 0$ then as $s_G(i)\ne \infty$, $s_G(i) \ne s_{G'}(i)$ and $s_G(m) \ne s_{G}(i)$. 

On the other hand, if all these positions are $\mathcal{N}$-positions then we know that either $(G+*i)\circ *n = 0$ or there exists $j<n$, $(G+*i)\circ *j = 0$ because if not all positions that can be reached from this game are $\mathcal{N}$-positions. 

Thus, $s_G(i) = {\rm mex} (\text{\{}s_H(j) \text{, with } (H\in G \text{ and } j=i) \text{ or } (H\equiv G \text{ and } j < i) \text{\}})$.  \end{proof}

These sequences follow some rules and are in fact inverse of each other.

We name the set of all non negative integers $\mathbb N$ and the set of all positives integers $\mathbb N^*$.

\begin{proposition}
\label{proposition:bijection}
If $G$ is not empty and does not have the empty game as an option, $g_G$ is a bijection from $\mathbb N$ to $\mathbb N$ with inverse $s_G$.

If $G$ is not empty and  have the empty game as an option, $g_G$ is a bijection from $\mathbb N$ to $\mathbb N^*$ with inverse $s_G$.
\end{proposition}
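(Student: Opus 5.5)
The plan is to work directly from the definitions, together with Lemma~\ref{lemma:infty} and the uniqueness facts already established. By definition $g_G(n)$ is the unique $m$ with $(G+*m)\circ *n=0$. Likewise $s_G(m)$ is the unique $x$ with $(G+*m)\circ *x=0$ whenever such an $x$ exists and is unique. Both functions therefore read off the same set of $\mathcal P$-positions
\[P_G=\{(m,n) \mid (G+*m)\circ *n=0\},\]
one row-wise and one column-wise. For every $n$ there is exactly one $m$ with $(m,n)\in P_G$; this is the existence and uniqueness of $g_G(n)$ shown before Lemma~\ref{lemma:infty}. For a fixed $m$ there is at most one $n$ with $(m,n)\in P_G$. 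Indeed, two such $n_1<n_2$ would give a move from $(G+*m)\circ *n_2$ to $(G+*m)\circ *n_1$; by Definition~\ref{splitdefinition} this move exists because $G+*m\not\equiv 0$, which holds since $G$ is non-empty. So $g_G$ is injective, and $s_G(g_G(n))=n$ for every $n$. It remains to describe the image of $g_G$, that is, the set of $m$ for which some $n$ satisfies $(m,n)\in P_G$. On that image $s_G$ is finite and inverse to $g_G$.

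\textbf{Case $0\notin G$.} Lemma~\ref{lemma:infty} says that $s_G(m)=\infty$ forces $G\equiv 0$ or ($0\in G$ and $m=0$); neither can occur here. Hence $s_G(m)$ is finite for every $m$, so some $n$ has $(m,n)\in P_G$, and then $g_G(n)=m$. Thus $g_G$ is surjective onto $\mathbb N$ and $s_G$ is its two-sided inverse.

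\textbf{Case $0\in G$.} Lemma~\ref{lemma:infty} again gives $s_G(m)<\infty$ for all $m\ge 1$, so every $m\ge1$ lies in the image of $g_G$. It remains to show that $0$ is not in the image, i.e.\ $(G+*0)\circ *n=G\circ *n\neq 0$ for all $n$. This holds because $G\circ *n$ has the option $0\circ *n\equiv 0$ by Definition~\ref{splitdefinition}, so it is an $\mathcal N$-position; this is the first case of Theorem~\ref{mexrule}. Hence $g_G(n)\ge 1$ for every $n$, and $g_G$ is a bijection $\mathbb N\to\mathbb N^*$ with inverse $s_G$ restricted to $\mathbb N^*$. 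On $\mathbb N^*$ the value $s_G$ is finite. The value $s_G(0)=\infty$ lies outside the domain of the inverse, consistent with Theorem~\ref{mexrule2}.

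The main point requiring care is surjectivity, i.e.\ showing that $s_G(m)$ is actually finite and not merely well defined when it exists. All of this is carried by Lemma~\ref{lemma:infty}, whose "no solution" case relies on the finiteness of the option set of $G+*m$. I would check that the lemma really covers every $m\ge1$ when $0\in G$. A zero option of $G+*m$ can only be $0$ itself, arising either from $G$ and $m=0$ or from $*m$ with $G\equiv 0$. So for $m\geq 1$ and $G\not\equiv0$ the pigeonhole argument in the lemma yields a contradiction, as needed.
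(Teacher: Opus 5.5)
Your proof is correct. Injectivity matches the paper, but surjectivity takes a different route.

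For injectivity, the paper appeals to the recursion of Theorem~\ref{mexrule2}, which encodes exactly your move from $(G+*m)\circ *n_2$ to $(G+*m)\circ *n_1$.

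For surjectivity, the paper argues by minimal counterexample inside that recursion. Let $m$ be the least value never attained by $g_G$. Beyond some index every smaller value has been attained. So $g_G(k)=m$ at every later $k$ where no option $G'$ has $g_{G'}(k)=m$. Such a $k$ exists because $G$ has finitely many options and each $g_{G'}$ is injective; when the empty option is present, $g_0$ only ever takes the value $0\neq m$.

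You instead observe that $m$ lies in the image of $g_G$ exactly when $s_G(m)<\infty$. Surjectivity is then precisely the content of Lemma~\ref{lemma:infty}, whose pigeonhole runs over the options of $G+*m$ rather than those of $G$.

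What each buys:
\begin{itemize}
\item Your version makes the proposition an almost immediate corollary of Lemma~\ref{lemma:infty}. It needs neither Theorem~\ref{mexrule2} nor injectivity of the option functions $g_{G'}$.
\item Excluding $0$ from the image because $G\circ *n$ has the empty game as an option is more direct than the paper's appeal to $g_0\equiv 0$ sitting in the mex set.
\item You make explicit that in the second case the inverse is $s_G$ restricted to $\mathbb N^*$, since $s_G(0)=\infty$; the paper glosses over this.
\item The paper's argument stays entirely within the $g$-recursion and does not depend on the lemma.
\end{itemize}
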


\begin{proof}
We take $G$ a non-empty game.

By the mex rule, we have the injectivity of $g_G$.

If $G$ does not have the empty game as an option:

Suppose that $g_G$ is not surjective. We take the minimum $m$ integer not reached by $g_G$.

At a finite $n$, we have for all $i<m$ there exists a $j<n$ such that $g_G(j)=i$. Then for all $k>n$, according to the mex rule, if there does not exist a $G'\in G$ such that $g_{G'}(k)=m$ we have $g_G(k)=m$ but there are only a finite number of options from $G$ and $g_{G'}$ is injective then it is possible to reach $m$ and we have a contradiction.
Then $g_G$ is surjective.

If $G$ has the empty game as an option:

$g_G(n)\ne 0$ for all $n$ because $g_0(n)=0$ for all $n$. Then $0$ is non-reachable by $g_G$.

Suppose that $g_G$ is not surjective. We take the minimum $m \in \mathbb N$ integer not reached by $g_G$.

At a finite $n$, we have for all $i<m$ there exists a $j<n$ such that $g_G(j)=i$. Then for all $k>n$, according to the mex rule, if there does not exist a $G'\in G$ that is not the empty game and $g_{G'}(k)=m$ we have $g_G(k)=m$ but there are only a finite number of options from $G$ and $g_{G'}$ is injective then it is possible to reach $m$ and we have a contradiction.
Then $g_G$ is surjective.

As $(G+*g_G(n)) \circ *n=0$ we have $s_G(g_G(n))=n$ then $s_G$ is the inverse of $g_G$.
 \end{proof}

We can in fact find for all sequence that are bijective from $\mathbb N$ to $\mathbb N$ or $\mathbb N$ to $\mathbb N^*$, a game where the first $n$ terms of the sequence and its extended Grundy function are the same and a game where the first $n$ terms of the sequence and its extended split function are the same.

\begin{theorem}
For all $n$, $a_0$, $\ldots$, $a_n$ such that for all $i, j\leq n$, if $i \ne j$, $a_i \ne a_j$, there exists $G$ such that for all $i \leq n$, $g_G(i)=a_i$.

For all $n$, $a_0$, $\ldots$, $a_n$ such that for all $i,j\leq n$, if $i \ne j$ $a_i \ne a_j$, there exists $G$ such that for all $i \leq n$, $s_G(i)=a_i$.
\end{theorem}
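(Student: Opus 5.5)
The plan is to build $G$ explicitly, using the recursion of Theorem~\ref{mexrule2} for the first statement and then deducing the second statement from the first via Proposition~\ref{proposition:bijection}.

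\emph{Reduction of the second statement to the first.} Suppose we are given distinct $a_0,\ldots,a_n$. Put $N=\max_i a_i$. Define $b_0,\ldots,b_N$ by $b_{a_i}=i$, and fill the remaining indices $j\le N$ with pairwise distinct integers larger than $n$. This is a finite injective sequence, and none of its values is $0$ except possibly $b_{a_0}=0$. If the first statement gives $G$ with $g_G(j)=b_j$ for all $j\le N$, then $G$ is nonempty, and Proposition~\ref{proposition:bijection} says $s_G$ is the inverse of $g_G$. Hence $s_G(i)=a_i$ for all $i\le n$. In the case $b_{a_0}=0$ we must also make sure $0\notin G$, so the construction below should avoid the empty option whenever $0$ is one of the prescribed values. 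So everything rests on the first statement.

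\emph{Construction for the first statement.} By Theorem~\ref{mexrule2}, for a nonempty $G$,
\[
g_G(k)={\rm mex}\bigl(\{g_H(k)\mid H\in G\}\cup\{g_G(j)\mid j<k\}\bigr).
\]
Hence $g_G(k)=a_k$ for all $k\le n$ holds as soon as the options of $G$ satisfy two conditions, given inductively that $g_G(j)=a_j$ for $j<k$:
\begin{itemize}
\item[(a)] every $v<a_k$ with $v\notin\{a_0,\ldots,a_{k-1}\}$ equals $g_H(k)$ for some option $H$;
\item[(b)] no option $H$ has $g_H(k)=a_k$, for any $k\le n$.
\end{itemize}
So I would prove the statement by induction on the pair $(\max_i a_i,\,n)$, ordered lexicographically. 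For each index $k\le n$ and each value $v$ required in (a), I would take as an option a game $H_{k,v}$ given by the induction hypothesis. It is chosen so that $g_{H_{k,v}}(k)=v$ and $g_{H_{k,v}}(k')=c_{k'}$ for every other $k'\le n$. Here $(c_{k'})$ is a system of distinct representatives with $c_{k'}\neq a_{k'}$ and $c_{k'}\neq v$, which makes (b) hold automatically. The base cases are the empty game, with $g_0\equiv 0$, and small nimbers, handled by direct computation.

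\emph{Main obstacle.} The hard step is keeping the induction measure decreasing. The value $v$ is below $a_k\le\max a$, but the filler values $c_{k'}$ must be distinct and avoid the forbidden $a_{k'}$. When $\{a_i\}$ is nearly $\{0,\ldots,n\}$, for instance a permutation of it, there is no room to choose them all below $\max a$. My first attempt would be to prescribe $H_{k,v}$ only on indices $0,\ldots,k$, which is a shorter sequence, so induction on $n$ applies. I would then control the uncontrolled later values $g_{H_{k,v}}(k')$ for $k'>k$ by adding, at stage $k'$, extra options that shift the mex past any accidental collision with $a_{k'}$.

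If that fails, the fallback is different. I would use the fact that $g_{G+*m}$ is $g_G$ composed with Nim-addition of $m$ (via Theorem~\ref{theorem_spit_sum} and Proposition~\ref{righequality}), and the fact that $g_{G\circ *m}(k)=g_G(k\oplus m)$. With these transformations I would move collisions to values or indices already handled, and so realise any needed permutation pattern from smaller ones.
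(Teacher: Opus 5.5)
\textbf{Verdict: the proposal has a genuine gap.} The reduction of the second statement to the first is fine, but the first statement, which carries all the content, is not proved.

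\textbf{What works.} Your reduction is correct and is the paper's own argument. The paper takes $G$ with $g_G(a_i)=i$; your padding $b_j$ makes this precise. The condition $0\notin G$ is automatic, since $0\in G$ would put $g_0(k)=0$ into every mex set.

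\textbf{Why your two remedies fail.} Your ``first attempt'' cannot work. If an option $H$ has $g_H(k')=a_{k'}$, then $a_{k'}$ lies in the mex set of Theorem~\ref{mexrule2} at stage $k'$. Adding further options only enlarges that set. So a collision can never be repaired afterwards; it must be excluded inside each option.

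The fallback rests on a false identity: $g_{G+*m}\neq g_G\oplus m$ in general. For $G=0$, $m=1$, the right side is constantly $1$, while $g_*(0)=1$ and $g_*(1)=2$. Theorem~\ref{theorem_spit_sum} and Proposition~\ref{righequality} only control the right argument of $\circ$. The left argument does not respect equality, as the paper's example $*\circ*$ versus $(*+*+*)\circ*$ shows. Your other identity, $g_{G\circ *m}(k)=g_G(k\oplus m)$, is actually true. However, it only relabels indices, and you give no argument that such relabelings remove collisions.

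\textbf{The missing idea: the induction measure.} The paper inducts on $n$ and then on $a_n$, i.e.\ on $(n,a_n)$ lexicographically. Values at indices $j<n$ then do not enter the measure, so fillers there can be taken arbitrarily large ($>a_j$). Each option gets the covering value $\ell<a_k$ at one index $k<n$ and large values at the other indices below $n$. This gives your condition (b) for free below $n$. The last index is handled in two cases.
\begin{itemize}
\item If $a_n\ge 2$: prescribe the options at index $n$ with a value $<a_n$, chosen in $\{0,1\}\setminus\{\ell\}$. Add options $G_i$ with $g_{G_i}(n)=i$ for $i<a_n$. All of these are sequences of the same length with a smaller last term.
\item If $a_n\le 1$: prescribe the options only on indices $<n$, which are shorter sequences. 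Keep the uncontrolled value $g_F(n)$ off $a_n$ by injectivity of $g_F$: require each option to take the value $a_n$ at some earlier index. For $a_n=1$, add one option with value $0$ at index $n$, which comes from the case $a_n=0$. The case $n=1$, $a_1=0$ is done separately via Theorem~\ref{theorem:s=1}.
\end{itemize}
When you write this out, also include, for each $i<n$, an option with value $a_n$ at index $i$ and large values elsewhere. This ensures $a_n$ is covered at index $i$ even when no index $k$ with enough room exists. The paper's index sets omit this, and its $a_n=0$ construction fails as written for $(a_0,a_1,a_2)=(1,2,0)$.
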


We know that there exists a bijection from $\mathbb N$ to $\mathbb N$ that corresponds to any $g_G$ because there is an uncountable number of bijections but only a countable number of short games. 

\begin{proof}
We first prove that the theorem works for the extended Grundy function.

We proceed by induction on $n$ then on $a_n$.

First the theorem is true for $n=0$ because $g_G(0)$ is the Grundy value of $G$.

We separate the cases where $a_1=0$ and $a_1=1$ with $n=1$, as these cases need a variation in the argument.

For $n=1$, if $a_1=0$ we can take the game $H=\{F_k|0\le k<a_0\}$ with $F_1=*$ and  $F_k=*k+*+*$ for all $k<a_0$ and $k\ne 1$, by Theorem~\ref{theorem:s=1} for all these games $g_{F_k}(1)\ne 0$ and $g_{F_k}(0)=g(F_k)=k$. Thus, $g_H(0)=a_0$ and $g_H(1)=0$  by the mex rule.

If $a_1=1$ and $a_0\ne 0$ we can take the game $H=\{0,F_k|0< k<a_0\}$ with $F_k$ for all $0<k<a_0$ such that $g_{F_k}(0)=k$ and $g_{F_k}(1)=0$. Thus, $g_H(0)=a_0$ and $g_H(1)=1$  by the mex rule.

If $a_1=1$ and $a_0= 0$ we can take the game $H=\{F\}$ with $F$ such that $g_{F}(0)>0$ and $g_{F}(1)=0$. Thus, $g_H(0)=0$ and $g_H(1)=1$  by the mex rule.

Suppose the theorem is true for $n-1$:

If $a_n=0$, we want a game $H$ such as $g_H(n')=a_{n'}$ for all $n'\le n$. To achieve that, we take $H=\{F_{i,k,\ell}|0\le i, k <n, i\ne k,0< \ell < a_k \}$ with $F_{i,k,\ell}$ such as $g_{F_{i,k,\ell}}(i)=0$, $g_{F_{i,k,\ell}}(k)=\ell$ and $g_{F_{i,k,\ell}}(j)>a_j$ for all $j<n$ with $j\ne k,i$. We can create all these games by induction on $n$. We then have a game $H$ such as $g_H(n')=a_{n'}$ for all $n'<n$ and $g_H(n)=0$ by applying the mex rule as $g_{F_{i,k,\ell}}(n)\ne g_{F_{i,k,\ell}}(i)=0$.


If $a_n=1$, we want a game $H$ such as $g_H(n')=a_{n'}$ for all $n'\le n$. We take the game  $H=\{G,F_{i,k,\ell}|0\le i,k <n, i\ne k, 0\le \ell<a_k, \ell \ne 1\}$ with $F_{i,k,\ell}$ such that $g_{F_{i,k,\ell}}(i)=1$, $g_{F_{i,k,\ell}}(k)=\ell$ and $g_{F_{i,k,\ell}}(j)>a_j$ for all $j<n$ with $j\ne k,i$ and $G$ such as $g_G(j)>a_j$ for all $j<n$ and $g_G(n)=0$. Such $G$ and $F_{i,k,\ell}$ exist by induction hypothesis. For this game we have $g_H(n')=a_{n'}$ for all $n'< n$ and $g_H(n)=1$ by applying the mex rule as $g_{F_{i,k,\ell}}(n)\ne g_{F_{i,k,\ell}}(i)=1$.

Suppose the theorem holds for all $a_n<m$:

If $a_n=m$, we want a game $H$ such as $g_H(n')=a_{n'}$ for all $n'\le n$. We take the game $H=\{G_{i},F_{k,\ell}|0\le i <m, 0\le k <n, 0\le \ell<a_k\}$ with $g_{G_{i}}(n)=i$ and $g_{G_{i}}(j)>a_j$ for $j<n$ and $F_{k,\ell}$ such that $g_{F_{k,\ell}}(n)<m$, $g_{F_{k,\ell}}(k)=\ell$ and $g_{F_{k,\ell}}(j)>a_j$ for all $j<n$ with $j\ne k$. Such $G_i$ and $F_{k,\ell}$ exist by induction hypothesis.
For this game we have $g_H(n')=a_{n'}$ for all $n'< n$ and $g_H(n)=m$ by applying the mex rule.

This gives us the induction and proves the first part of the theorem.\\

We now prove that the theorem works for the extended split function.

By Proposition~\ref{proposition:bijection}, we know that the extended Grundy function is the inverse of the extended split function. Then by the first part of the theorem, we can take a game $G$ such that $g_G(a_i)=i$ for all $0\le i \le n$ giving us that $s_G(i)=a_i$ for all $0\le i \le n$.
 \end{proof}


\subsubsection{Acknowledgement} 
The author has no competing interests to declare that are relevant to the content of this article. I would like to extend my gratitude to Doctor Koki Suetsugu for welcoming me, introducing me to combinatorial game theory, having stimulating discussion and giving me valuable advice and a lot of warm helps, especially with proofreading and correcting errors.  
I also give my sincere gratitude to my supervisor Professor Takeaki Uno for accepting me for these internship and for all the help in the administrative aspects.

\printbibliography

\end{document}